\newtheorem{theorem}{Theorem}
\newtheorem{definition}{Definition}
\newtheorem{lemma}{Lemma}
\newtheorem{corollary}{Corollary}
\newtheorem{remark}{Remark}
\title{\huge Epidemic Processes over Time--Varying Networks }
\author{Philip E. Par\'{e},
Carolyn L. Beck, and
Angelia Nedi\'{c}*\thanks{
* Philip E. Par\'{e}, Carolyn L. Beck, and Angelia Nedi\'{c} are with the Coordinated Science Laboratory at the University of Illinois at Urbana-Champaign and can be reached at {\tt philip.e.pare@gmail.com} (corresponding author), {\tt beck3@illinois.edu}, and {\tt angelia@illinois.edu}, respectively and 1308 W Main St, Urbana, IL 61801.  This material is based on research partially sponsored by the National Science Foundation, grants ECCS 15-09302, CCF 0904619, and DMS 13-12907.  All material in this paper represents the position of the authors and not necessarily that of the NSF.
}}
\begin{document}
\maketitle

\begin{abstract}
The spread of viruses in  biological networks,  computer networks, and human contact networks can have devastating effects; developing and analyzing mathematical models of these systems can be insightful and lead to societal benefits. Prior research has focused mainly on  network models with static graph structures, however the systems being modeled typically have dynamic graph structures. Therefore to better understand and analyze virus spread, further study is required.  In this paper, we consider  virus spread models over networks with dynamic graph structures, and investigate the behavior of diseases in these systems. A stability analysis of  epidemic processes over time--varying networks is performed, examining conditions for the disease free equilibrium, in both the deterministic and stochastic cases. We present  simulation results, propose a number of corollaries based on these simulations, and discuss quarantine control via simulation.
\end{abstract}

\section{Introduction}

%Epidemic processes, or virus models, %are important systems to study due to their 
%have a broad spectrum of applications. They %can be 
%are frequently used to model the spread of diseases, opinions, trends, and computer viruses. Analyzing these models to understand their stability properties can lead to  effective control techniques for viruses in networks. Whether the application be computer networks or human contact networks, this problem is of fundamental importance as has been punctuated recently %in the news 
%by the devastation resulting from the Ebola virus in Africa \cite{whoEbola}, and the anxiety associated with recent outbreaks of measles in the United States \cite{cdcMeasles}.
%There are many natural questions that arise for these given a limited amount of antidote/disease, which agents do you treat/infect in order to 

Mathematical models of epidemics have been studied for %a long time 
hundreds of years. Bernoulli developed one of the first known models inspired by the smallpox virus \cite{bernoulli1760essai}, while at the same time other models were being developed \cite{dietz2002bernoulli}.
Of particular interest to the work discussed herein are the so-called %There are well developed 
susceptible-infected-susceptible (SIS) models, which have been developed for both continuous  \cite{kermack1932contributions,fall2007epidemiological,van2009virus,ahn2013global,pare2015stability} and discrete  time domains \cite{ahn2013global,wang2003epidemic,peng2010epidemic}.  These are standard models commonly used to capture
the evolution of virus infections in networks.  In a basic SIS model, at each time step each individual node or agent is either {\em infected}, or {\em susceptible} to infection.  A susceptible agent may be infected by neighboring agents with some given infection rate, $\beta$, where the network graph structure determines the
connectivity between agents, and hence plays a direct role in facilitating or inhibiting the spread of infection.  An infected agent may be cured, returning to the susceptible state, with some given healing rate, $\delta$. The first SIS model, developed by Kermack and McKendrick \cite{kermack1932contributions}, is given by
\begin{equation}\label{eq:sis}
\begin{split}
\dot{S}(t) & =  -\beta S(t)I(t) + \delta I(t) \\
\dot{I}(t) & =  \beta S(t)I(t) - \delta I(t) ,
\end{split}
\end{equation}
where $S(t)$ is the group of susceptible agents and $I(t)$ is the group of infected agents. This model considers the propagation of a virus over a trivial network, that is, it assumes complete connectivity, and models the infected and susceptible agents as two aggregated groups.

In Wang, {\em et al.} \cite{wang2003epidemic}, a discrete time model for virus spread over a nontrivial network is proposed, and an epidemic threshold is derived, guaranteeing convergence to the disease free equilibrium (DFE). The convergence rate is given in terms of the largest eigenvalue of the adjacency matrix of the network, which is proportional to the ratio of the healing and infection rates.  A necessary and sufficient condition for exponential stability of the DFE is provided, with several simulations and results for specific graph structures given. In Peng, {\em et al.} \cite{peng2010epidemic}, the authors provide a necessary and sufficient condition for stability of  the DFE for an extension of the model from \cite{wang2003epidemic}.  Peng, {\em et al.} \cite{peng2010epidemic} also discuss possible immunization techniques, and formulate a convex optimization problem for forming an optimal immunization strategy, based on a relaxation of the problem constraints. In Ahn and Hassibi \cite{ahn2013global}, the authors study both discrete and continuous time SIS models.  Both the DFE and the non-disease free equilibrium (NDFE) of several models are considered, and existence, uniqueness, and stability conditions of the NDFE are established. In Fall, {\em et al.} \cite{fall2007epidemiological}, a continuous time SIS model is analyzed, with sufficient conditions for global asymptotic stability derived for both the DFE and the NDFE. In Van Mieghem, {\em et al.} \cite{van2009virus}, the authors expose some limitations of the model presented in \cite{wang2003epidemic}, namely that it is only accurate if the spread rate is below the epidemic threshold. They propose a $2^n$-state Markov chain model and an $n$-intertwined Markov chain model as alternatives to the model in \cite{wang2003epidemic} and explore the properties of these models by studying the limiting cases of the complete graph and the line graph. These two models will be explained in more detail in the following section. Geometric programming ideas \cite{boyd2007tutorial} are used in Preciado, {\em et al.} \cite{preciado2013optimal,preciado2014optimal}, to develop optimal vaccination techniques for the continuous time model in \cite{van2009virus}. %, and applied to an online social network and an air transportation network.
In Pasqualetti, {\em et al.} \cite{bullo2014control}, a network control technique is applied to a discretized, linearized version of the model from \cite{van2009virus}. In Khanafer, {\em et al.} \cite{khanafer2014stability,Khanafer15arxiv}, the stability properties of the equilibria of the continuous time model from \cite{van2009virus} are further explored with an antidote control technique proposed. For a more complete survey of this area see \cite{nowzari2016analysis}.

The aforementioned papers consider only static graph structures.  Although contributing to the understanding of virus spread and the ensuing eradication,
in most cases static graph structures are fundamentally too simple to capture the essential dynamics of infectious disease processes. Most applications that motivate these systems have agents that are mobile, which implies that the underlying graph structure is time--varying. For example, computer networks are comprised of smart phones, laptops, and other mobile devices which connect to different devices as they move around. %Similarly, in human contact networks, people move around and interact with different people throughout the day. %moving around. 
In order to obtain a better understanding of these systems, a more realistic representation, 
that of {\em virus dynamics over time--varying networks}, is necessary.

The previous work on virus spread over time--varying networks is limited to unweighted and undirected graphs. In Prakash, {\em et al.} \cite{prakash2010virus}, the authors extend the discrete time model used in \cite{wang2003epidemic} to a model with a time--varying graph structure. They provide a sufficient condition for local exponential stability of the origin (the DFE), and propose a control scheme that removes immune agents from the system. %, thus implicitly making the assumption that immunity renders the agent incapable of spreading the virus.
In Bokharaie, {\em et al.} \cite{bokharaie2010spread}, the authors provide similar results to those in \cite{prakash2010virus}, extending the model from \cite{wang2003epidemic} to the time--varying case and proving local exponential convergence to the origin.  Bokharaie, {\em et al.}  further state that these results extend to the continuous time--varying case, without proof. Both \cite{prakash2010virus} and \cite{bokharaie2010spread} consider unweighted, undirected graphs and extend the model from \cite{wang2003epidemic}, which was shown in \cite{van2009virus} to have significant shortcomings. 
In Rami, {\em et al.}  \cite{rami2014switch}, the authors extend the model from \cite{fall2007epidemiological} to that of a switching virus model. A sufficient condition for stability of the DFE is provided, existence of a periodic NDFE is shown, and  a sufficient condition for stability of the DFE for a Markovian switching virus model is given.

In this paper, we propose notable extensions to the models from \cite{van2009virus} to include   time--varying, weighted, undirected and directed graph structures and provide sufficient conditions for global exponential stability of the DFE for these models. %both the DFE and the NDFE of the model. 
We compare the $2^n$-state Markov chain model and $n$-intertwined Markov chain model via simulations for both static graphs and time--varying graphs.
%We employ several corollaries and propositions to give insight into specific structures and dynamic behaviors, including random graph structures.
We present various simulations that give insight into stability and  quarantine control results, and use these to motivate  several corollaries and remarks. %We also formulate an antidote control technique for the undirected time--varying graph model based on one of our theorems. 
A preliminary version of these results is given in \cite{pare2015stability}. Additional contributions of this paper include: 1) the development of conditions for global exponential convergence to the DFE for heterogeneous viruses over directed time--varying networks, % given in Theorem \ref{thm:DFEasymp},
2)  extensions of the time--varying virus models to a {\em stochastic framework} with accompanying stability results, and 3) an in-depth evaluation of the effectiveness of the $n$-intertwined Markov chain model as a mean field approximation of the $2^n$ model via simulation.%, and 4) an antidote control formulation.% was not included in \cite{pare2015stability}.

The paper is organized as follows. We first introduce our notation. In Section \ref{sec:model} we introduce the models from \cite{van2009virus} with  the mean field approximation derivation of the $n$-intertwined Markov chain model from  the $2^n$-state Markov chain model, and present the time--varying extensions of these models. In Section \ref{sec:stab} we explore the stability properties of the time--varying extensions of the $n$-intertwined Markov chain model. In Section \ref{sec:sim} we evaluate the effectiveness of the mean field approximations of  both the static and time--varying $n$-intertwined models by comparing them to the $2^n$ model via simulation. Based on these simulations and others, we state and prove several corollaries. %In Section \ref{sec:control}, we present the antidote control formulation.
Finally, in Section \ref{sec:con} we conclude, summarizing the results and discussing future work.

\subsection{Notation}

Given a vector function of time $x(t)$, $\dot{x}(t)$ indicates the time-derivative. Given a vector $x \in \mathbb{R}^{n}$, the 2-norm is denoted by $\|x\|$ and the transpose by $x^T$. Given a matrix $A \in \mathbb{R}^{n \times n}$, the maximum eigenvalue %of $A$ 
is $\lambda_1(A)$ (if the spectrum is all real), the largest real-valued part of the eigenvalues of $A$ 
is $s_1(A)$ (if the spectrum is possibly complex), 
$a_{ij}$ indicates the $i, j^{th}$ entry of $A$, and $\| A \|$ indicates the induced 2-norm of $A$. The notation $diag(\cdot)$ refers to a diagonal matrix with the argument on the diagonal. %The $\sup_{t\geq 0}$ function is the supremum operator over time $t \geq 0$. %Similarly, $\inf_{t\geq 0}$ is the infimum operator over time $t \geq 0$. 
We use $E[\cdot]$ to denote the expected value of the argument and $Pr[\cdot]$ to denote the probability of the argument. %The natural logarithm is denoted by $ln(\cdot)$. %$H(p)$ is the Hessian of $p(t)$, defined as $H(p)_{ij} = \frac{\partial \dot{p}_i}{\partial p_j}$.

\section{The Virus Model} \label{sec:model}

%We introduce the SIS model considered, include a review of the two models from \cite{van2009virus}., and 

%I will review the models in \cite{van2009virus}, including the $2^n$-state Markov chain and how a mean field approximation is used on this model to derive the $n$-intertwined Markov chain model.

%\subsection{The Model}

In \cite{van2009virus},  a $2^n$-state Markov chain is introduced, where  each state of the chain, $Y_k(t)$, corresponds to a binary vector  $x \in \mathbb{R}^n$ %with $x_i \in \{ 0,1\}$, susceptible or infected,
and the state transition matrix, $Q$, is defined by
\begin{equation}\label{eq:qij}
q_{kl}=
\begin{cases}
\delta, & \text{ if } x_i = 1, k = l + 2^{i-1}\\
\beta \displaystyle\sum_{j=1}^n a_{ij}  x_j, &\text{ if } x_i = 0, k = l - 2^{i-1} \\[2.5ex]
-\displaystyle\sum_{j\neq l} q_{jl}, & \text{ if } k = l\\
0, & \text{ otherwise,}
\end{cases}
\end{equation}
for $i = 1,...,n$. Here a virus is propagating over a network defined by $a_{ij}$ (non-negative and $a_{jj}=0 \ \forall j$), with $n$ agents, or possibly $n$ groupings of agents, $\beta$ is the infection rate, $\delta$ is the healing rate, and  $x_i=1$ or $x_i=0$ indicates that the $i$th agent is either infected or susceptible, respectively. 
%The state $Y(t)$ of the network at time $t$ is defined by all possible combinations of states in which the $n$ nodes can be at time $t$
%\begin{equation}\label{eq:y}
%    Y(t) = [ Y_1(t) \ Y_2(t) \ ... \ Y_{2^n}(t)]^T
%\end{equation}
%is a standard basis vector, that is, all entries are zero except one, which is equal to one.

\begin{figure}
    \centering
    \includegraphics[width=.5\columnwidth]{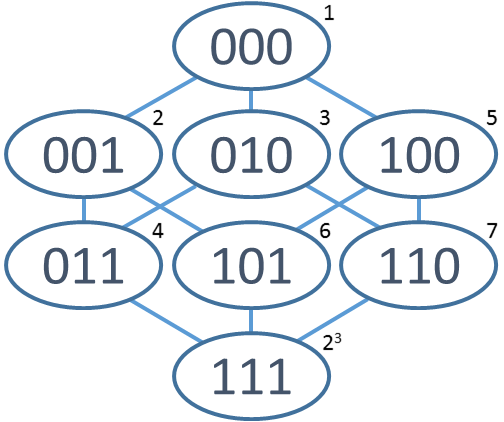}
    \caption{Example of $2^n$-state model with $n=3$: the superscripts indicate the ordering of the states, which correspond to the subscript of $y_k(t)$ in \eqref{eq:y}, and the internal strings indicate which agents are healthy (0) and which are infected (1), corresponding to $x_i$ in \eqref{eq:qij}.}
    \label{fig:2to3}
\end{figure}
The state vector $y(t)$ is defined as 
\begin{equation}\label{eq:y}
%    \begin{array}{lcl}
         y_k(t) = Pr[Y_k(t) = k], %&=& Pr[Y_i(t) = i] \\
%         &=& Pr[X_1(t) = x_1, ..., X_n(t) = x_n],
%    \end{array}
\end{equation}
with $\sum_{k=1}^{2^n} y_k(t) =1$. The Markov chain evolves as
\begin{equation}\label{eq:2n}
    \frac{dy^T(t)}{dt} = y^T(t)Q.
\end{equation}
See Figure \ref{fig:2to3} for an example of this chain with $n=3$.
Let  $v_i(t) = Pr[X_i(t) = 1]$, where $X_i(t)$ is the random variable representing whether the $i$th agent is infected, then
\begin{equation}\label{eq:v}
    v^T(t) = y^T(t)M,
\end{equation}
where $M \in \mathbb{R}^{2^n\times n}$ with the rows being lexicographically-ordered binary numbers, bit reversed\footnote{Matlab code: $M = fliplr(dec2bin(0:(2^n)-1)-'0')$}. That is, $v_i(t)$ reflects the summation of all probabilities where $x_i = 1$, therefore giving the mean, $E[X_i]$, of the infection, $X_i$, of agent $i$. %Since $x_i \in \{ 0,1\}$, the variance of the distribution is given by
%\begin{equation}
%    \sigma^2_i(t) = v_i(t) - v_i(t)^2.
%\end{equation}

A mean field approximation of this system is used to obtain an $n$-intertwined continuous Markov chain, where each node has two states: infected with probability $Pr[X_i(t)=1]$ and healthy (susceptible) with probability $Pr[X_i(t)=0]$. Taking the expected value of the second case of \eqref{eq:qij}, that is, the infection transition rate, %the above equation 
and using $E[1_{z}] = Pr[z]$ gives
\begin{align}\label{eq:mean}
        E[q_{kl}| x_i = 0, k = l - 2^{i-1}] &= E\big[\beta \displaystyle\sum_{l=1}^n a_{ij}  1_{\{X_j(t)=1\}}\big]\\ 
        &= \beta \displaystyle\sum_{l=1}^n a_{ij}Pr[X_j(t)=1],\nonumber
\end{align}
where the second equality holds since the $\beta$ and $a_{ij}$ values are deterministic and known.

Denoting $p_i(t) = Pr[X_i(t)=1]$ and noting that $Pr[X_i(t)=0] = 1 - p_i(t)$, we can see that
\begin{equation}\label{eq:p}
\dot{p}_i(t) = (1 - p_i(t))\beta \sum_{j=1}^n a_{ij}  p_j(t) - \delta p_i(t).
\end{equation}
Applying the Central Limit Theorem, under the assumption of independent indicators, implies that large deviations from the mean are unlikely; this is the motivation for the mean field approximation. However, it is clear that the indicators are not  independent, by construction. The authors of \cite{van2009virus} state 
\begin{equation*}
Pr[X_j(t)=1 | X_i(t) = 1 ] \geq Pr[X_j(t) = 1],
\end{equation*}
which is true under the assumption $\beta \geq 0$, because if one node in the system is infected, it will have only a non-negative effect on the probability of infecting other nodes. That is, one agent being infected will never decrease the probability of another agent becoming infected. %, the only way it can affect it is positively, if at all.
Therefore, the $n$-intertwined Markov chain model gives an {\em upper-bound} for the exact probability of infection $p_i(t)$ \cite{van2009virus}. It has been shown that, under certain conditions, mean field approximations of SIS models may be inaccurate, leading to incorrect results \cite{chatterjee2009contact}. However, as we have shown, the mean field approximation considered herein, while it is an approximation,  is well constructed. The shortcomings of this mean field approximation are illustrated in Section \ref{sec:sim}.

The model in \eqref{eq:p} can be generalized to the heterogeneous virus, directed graph structure case:% \cite{preciado2013optimal}, where the probability of infection of agent $i$ evolves as
\begin{equation}\label{eq:vani}
\dot{p}_i(t) = (1 - p_i(t))\beta_i \sum_{j=1}^n a_{ij} p_j(t) - \delta_i p_i(t),
\end{equation}
where %$\dot{p}_i$ is the rate of change of the probability that agent $i$ is infected; 
$\beta_i$ is the non-negative susceptibility or infection rate of agent $i$; $n$ is the number of agents; $a_{ij}$ is the directed, non-negative, weighted, connection between agents, with $a_{ij}=0$ if agents $i$ and $j$ are not neighbors, and $a_{ii} = 0$; and $\delta_i$ is the non-negative healing rate of agent $i$. In matrix form, with $p(t)$ representing the vector of the probabilities of infection of the agents, the model is
\begin{equation}\label{eq:van}
\dot{p}(t) = (BA - P(t)BA - D)p(t),
\end{equation}
where $B = diag(\beta_1,\dots, \beta_n)$, $A = [a_{ij}]$ represents the weighted network structure, $P(t)= diag(p_1(t),\dots, p_n(t))$, and $D = diag(\delta_1, \dots, \delta_n)$. 
Each node of the network can be interpreted as an individual agent \cite{van2009virus}, or as the centroid of a community, i.e.,  as a grouping of individuals \cite{fall2007epidemiological}.

In this paper, a time--varying extension of the model in \eqref{eq:van} is considered, that is 
\begin{equation}\label{eq:tv}
\dot{p}(t) = (BA(t) - P(t)BA(t) - D)p(t),
\end{equation}
where now $A(t)$ is a function of time. Note that $A(t)$ is not necessarily symmetric, and depicts the links between the $n$ agents, similar to an adjacency matrix without the constraint to be binary-valued. The mean field approximation in \eqref{eq:mean} remains unaffected by this extension as long as the assumption is made that the $a_{ij}(t)$'s are deterministic and known functions. We assume that $p_i(0) \geq 0$ for all $i=1,...,n$.

For completeness, we also include a time--varying extension of the model in \eqref{eq:2n}. The extended model is
\begin{equation}\label{eq:2nt}
    \frac{dy^T(t)}{dt} = y^T(t)Q(t),
\end{equation}
where
\begin{equation*}%\label{eq:qijt}
q_{kl}(t)=
\begin{cases}
\delta, & \text{ if } x_i = 1, k = l + 2^{i-1}\\
\beta \displaystyle\sum_{j=1}^n a_{ij}(t)  x_j, &\text{ if } x_i = 0, k = l - 2^{i-1} \\[2.5ex]
-\displaystyle\sum_{j\neq l} q_{jl}(t), & \text{ if } k = l\\
0, & \text{ otherwise.}
\end{cases}
\end{equation*}
Note, due to the immense size of the $2^n$ model it is quite costly to employ, which is why the mean field approximation is useful.

\begin{lemma}\label{lem:pos}
If $p_i(0)\geq 0$, for all $i = 1,...,n$, then $p_i(t)\geq 0$ for all $t\geq0$, $i = 1,...,n$.
\end{lemma}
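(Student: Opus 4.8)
The plan is to show that the nonnegative orthant $\mathbb{R}^n_{\geq 0} = \{p : p_i \geq 0,\ i=1,\dots,n\}$ is forward invariant for the time--varying system \eqref{eq:tv}. Before invoking any invariance principle I would first record that the right--hand side $f(p,t) = (BA(t) - P(t)BA(t) - D)p$ is quadratic in $p$, hence continuously differentiable and locally Lipschitz in $p$ for each fixed $t$; assuming $A(t)$ is piecewise continuous and bounded, standard Carath\'{e}odory / Picard--Lindel\"{o}f theory supplies a unique solution on a maximal interval $[0,T)$, and every claim below refers to that solution.

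The core observation is a subtangentiality (Nagumo, or ``essentially nonnegative vector field'') condition on the boundary of the orthant. Writing the dynamics componentwise as in \eqref{eq:vani} and evaluating $\dot p_i$ on the face $\{p_i = 0\}$ with the remaining components nonnegative, both the $-\delta_i p_i$ term and the $-p_i \beta_i(\cdots)$ term vanish, leaving
\begin{equation*}
\dot{p}_i(t)\big|_{p_i = 0} = \beta_i \sum_{j=1}^n a_{ij}(t)\, p_j(t) \geq 0,
\end{equation*}
since $\beta_i \geq 0$, $a_{ij}(t) \geq 0$, and $p_j(t) \geq 0$. Intuitively this says the vector field never points strictly out of the orthant across any boundary face, which is exactly the hypothesis needed to keep a trajectory that starts in $\mathbb{R}^n_{\geq 0}$ inside it.

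To convert this into a rigorous conclusion I would argue by contradiction using the first exit time $t^\ast = \inf\{t \in [0,T) : p_i(t) < 0 \text{ for some } i\}$. By continuity $p(t^\ast) \geq 0$ with some coordinate $p_k(t^\ast) = 0$, and for the trajectory to escape one needs $\dot p_k(t^\ast) \leq 0$, whereas the boundary computation gives $\dot p_k(t^\ast) \geq 0$. The main obstacle is the degenerate case $\dot p_k(t^\ast) = 0$, where a single evaluation of the derivative does not by itself forbid escape; I expect this to be the one delicate point. I would resolve it either by (i) invoking Nagumo's invariance theorem directly, whose hypothesis is precisely the subtangentiality inequality just verified, or (ii) using a perturbation argument: for $\varepsilon > 0$ consider $\dot{p}^{\,\varepsilon} = f(p^{\,\varepsilon},t) + \varepsilon\,\mathbf{1}$, where $\mathbf{1}$ is the all--ones vector, so that on each boundary face $\dot{p}^{\,\varepsilon}_k \geq \varepsilon > 0$ strictly; the strict inward inequality makes the first--exit argument conclude cleanly that $p^{\,\varepsilon}(t)$ stays in the orthant, and then letting $\varepsilon \to 0^+$ via continuous dependence on parameters yields $p(t) \geq 0$. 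A more self--contained alternative is to freeze the solution, treat $g_i(t) := \beta_i \sum_j a_{ij}(t) p_j(t) + \delta_i$ as a known coefficient, rewrite \eqref{eq:vani} as the linear scalar equation $\dot p_i + g_i(t) p_i = \beta_i \sum_j a_{ij}(t) p_j$, and combine the resulting integrating--factor representation with a Gr\"{o}nwall--type differential inequality on $\min_i p_i(t)$ to preclude any coordinate from crossing zero.
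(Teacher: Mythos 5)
Your proposal is correct, and its core observation---that on the face $\{p_i=0\}$ with the remaining coordinates nonnegative the vector field in \eqref{eq:vani} satisfies $\dot p_i = \beta_i\sum_j a_{ij}(t)p_j \geq 0$, so the field never points strictly out of the orthant---is exactly the observation on which the paper's own proof rests. The difference is in what happens after that observation. The paper's proof is essentially the informal version you warn against: it argues coordinate-by-coordinate that if $p_i(0)=0$ then $\dot p_i(0)\geq 0$ ``drives'' $p_i$ nonnegative, and that if $p_i>0$ decreases and reaches zero at some time $T$ then $\dot p_i(T)\geq 0$ keeps it nonnegative thereafter; it never addresses the degenerate case $\dot p_i=0$ at the boundary (where a single derivative evaluation cannot exclude escape), nor the coupling issue that the sign of $\dot p_i$ on the face is only guaranteed while the \emph{other} coordinates are still nonnegative---something that must be handled by a simultaneous argument such as your first-exit time $t^\ast$ taken over all coordinates. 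Your completion via Nagumo's invariance theorem, or the self-contained $\varepsilon$-perturbation $\dot p^{\,\varepsilon}=f(p^{\,\varepsilon},t)+\varepsilon\mathbf{1}$ followed by $\varepsilon\to 0^+$ with continuous dependence, closes precisely these gaps, so your argument is strictly more careful than the published one; what the paper's version buys in exchange is brevity. Two small cautions on your side: state explicitly the regularity you need on $A(t)$ (the paper elsewhere assumes piecewise continuity and boundedness, under which derivative evaluations should be made at continuity points or in the Carath\'eodory sense), and note that your third alternative (integrating factor plus Gr\"onwall on $\min_i p_i$) is circular if used through the representation formula alone, since nonnegativity of the forcing term $\beta_i\sum_j a_{ij}(t)p_j(t)$ is what you are trying to prove; it does go through, but via the differential inequality on the minimum, using boundedness of $A(t)$ and of the solution on compact intervals to bound the coefficient.
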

\begin{proof}
Assume $p_i(0) = 0$ and $p_j(0)\geq 0$ for all $j\neq i$. Then by \eqref{eq:vani}, $\dot{p}_i(0) \geq 0$, driving  $p_i(t)\geq 0$ for $t>0$. % \not< 0$ for all $t\geq0$. 

Assume $p_i(0) > 0$ and $p_j(0)\geq 0$ for all $j\neq i$. Since there exists a derivative by \eqref{eq:tv}, $p_i(t)$ is continuous. Now suppose $\dot{p}_i(t) < 0$, for some interval $0 \leq \tau \leq t \leq  T$, where, by the continuity of $p_i(t)$, at time $T$ we have $p_i(T) = 0$. Then, similar to the first part of the proof, by \eqref{eq:vani}, $\dot{p}_i(T) \geq 0$ and $p_i(t) \geq 0$ for $t> T$.
\end{proof}

\section{Stability Analysis of the Time--Varying Model} \label{sec:stab}

In this section, stability analysis of the disease free equilibrium for the time--varying model in \eqref{eq:tv} is performed for the deterministic and several stochastic cases.

\subsection{Deterministic Case}

The DFE is the state where $p_i(t) = 0 $ for all $i$, which from \eqref{eq:tv} implies $\dot{p}_i(t) = 0$  for all $i$. %We note that the origin is known to be an equilibrium in the time--invariant case (\cite{khanafer2014stability}), and the origin will also be an equilibrium point for the time--varying case.
We show global exponential convergence to the DFE under certain conditions, to be made precise.

\subsubsection{Undirected Graph and Homogeneous in $\beta$ Case}
\begin{theorem}\label{thm:atorigin}
Suppose $\beta_i=\beta$ $\forall i$, %the maximum eigenvalue of $BA(t)-D$ is always less than zero, that is 
 $A(t)$ is symmetric, piecewise continuous in $t$, and bounded, and $\sup_{t\geq0} \lambda_1(BA(t)-D)<0$. Then the DFE is globally exponentially stable (GES).
\end{theorem}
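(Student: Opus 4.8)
The plan is to use a quadratic Lyapunov function and exploit the two structural features of the hypotheses: the homogeneity $\beta_i=\beta$, which renders the matrix $BA(t)-D$ symmetric, and the positivity guaranteed by Lemma \ref{lem:pos}, which controls the sign of the nonlinear term. First I would observe that since $B=\beta I$ and $A(t)$ is symmetric with $D$ diagonal, the matrix $BA(t)-D=\beta A(t)-D$ is symmetric for every $t$; hence its spectrum is real and $\lambda_1(BA(t)-D)$ is well defined. Writing $\eta := -\sup_{t\geq0}\lambda_1(BA(t)-D)$, the hypothesis says $\eta>0$.

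Next I would take $V(p)=\tfrac12\|p\|^2=\tfrac12 p^Tp$ as a candidate Lyapunov function and differentiate along trajectories of \eqref{eq:tv}, splitting the result into a linear and a nonlinear part:
\begin{equation*}
\dot V = p^T(BA(t)-D)p - p^T P(t)BA(t)p.
\end{equation*}
For the linear part, symmetry of $BA(t)-D$ gives the Rayleigh-quotient bound $p^T(BA(t)-D)p \leq \lambda_1(BA(t)-D)\|p\|^2 \leq -\eta\|p\|^2$. For the nonlinear part, I would invoke Lemma \ref{lem:pos} to ensure $p_i(t)\geq0$, and use that the entries $a_{ij}(t)$ are non-negative, so that $p^T P(t)BA(t)p = \beta\sum_i p_i^2\sum_j a_{ij}(t)p_j \geq 0$; thus the nonlinear term only helps, contributing $\leq 0$ to $\dot V$. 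Combining the two estimates yields $\dot V \leq -\eta\|p\|^2 = -2\eta V$, and a comparison/Gr\"onwall argument then gives $V(t)\leq V(0)e^{-2\eta t}$, equivalently $\|p(t)\|\leq\|p(0)\|e^{-\eta t}$, which is exactly global exponential stability of the DFE.

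The step I expect to require the most care is the sign of the nonlinear term: the whole argument collapses without the positivity of the state from Lemma \ref{lem:pos} together with the non-negativity of $A(t)$, since those are precisely what make $-p^T P(t)BA(t)p$ a non-positive (rather than indefinite) perturbation of the symmetric linear dynamics. A secondary technical point is that, because $A(t)$ is merely piecewise continuous, $\dot V$ exists only almost everywhere, so the differential inequality must be integrated in the almost-everywhere sense; boundedness of $A(t)$ keeps $\lambda_1(BA(t)-D)$ finite, while the hypothesis that its supremum is strictly negative furnishes the uniform decay rate $\eta>0$ needed for a time-independent exponential bound.
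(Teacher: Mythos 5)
Your proposal is correct and follows essentially the same route as the paper's proof: the quadratic Lyapunov function $V(p)=\tfrac12 p^Tp$, dropping the nonlinear term $-p^TP(t)BA(t)p$ via Lemma \ref{lem:pos} and the non-negativity of $\beta$ and $a_{ij}(t)$, and then the Rayleigh--Ritz bound on the symmetric matrix $BA(t)-D$ followed by the supremum hypothesis. The only cosmetic difference is the final step: the paper concludes by citing Theorem 8.5 of \cite{khalil1996nonlinear} (after noting piecewise continuity in $t$ and local Lipschitzness in $p$), whereas you integrate the differential inequality $\dot V\leq-2\eta V$ directly via the comparison lemma to get the explicit bound $\|p(t)\|\leq\|p(0)\|e^{-\eta t}$, which is an equally valid and arguably more self-contained conclusion.
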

\begin{proof}
%Following the proof from \cite{khanafer2014stability}, 
To simplify notation we will write $p=p(t)$. Consider an arbitrary $p\geq0$ and define a  Lyapunov function $V(p) = 
\frac{1}{2}p^T p$.  For $p\neq0$,
\begin{align*}
 \dot{V}(p) &= p^T\dot{p} = p^T(BA(t) - P(t)BA(t) - D)p  \\
 &\leq  p^T(BA(t) - D)p \\
 &\leq  \lambda_1(BA(t)-D)\|p\|^2  \\
 &\leq \left(\sup_{t\geq 0} \lambda_1(BA(t)-D)\right)\|p\|^2 <0.
\end{align*}
The first inequality holds because $(P(t)BA(t))_{ij}\geq 0, \ \forall i,j$ by our assumption that $\beta\geq 0$, $a_{ij}\geq 0$ for all $i,j$, and $p_i(t)\geq 0$ for all $i$, and by Lemma \ref{lem:pos}. The second inequality holds by the Rayleigh-Ritz Quotient (\cite{horn2012matrix}) because $BA(t)-D$ is symmetric (since $A$ is symmetric and $\beta_i=\beta$ $\forall i$). The last inequality holds by definition of the supremum. Therefore, since the system is piecewise continuous in $t$ and, by the boundedness of $A(t)$, locally Lipschitz in $p$ $\forall t,p \geq 0$, the system converges exponentially fast to the origin by Theorem 8.5 in \cite{khalil1996nonlinear}.
\end{proof}

\begin{remark}
Theorem \ref{thm:atorigin} requires $BA(t)-D$ to be symmetric, which considerably simplifies the analysis. The following lemma and theorem explore the case where symmetry is not assumed, that is, for the heterogeneous virus model (different $\beta_i$'s $\delta_i$'s $\forall i$) on directed graphs (non-symmetric $A$).
\end{remark}

\subsubsection{Directed Graph and Heterogeneous Virus Case} 

$ \ \ \ $
Consider the linearized system
\begin{equation}\label{eq:lin}
\dot{p}  = (BA(t) - D)p.
\end{equation}
The following result and proof are similar to Theorem 3.4.11 in \cite{ioannou2012robust} and the Lyapunov analysis is done point-wise in $t$.%, which states that if the time--varying $A(t)$ is bounded, continuously differentiable, and $\dot{A}(t)$ is small in one of two ways, then the system is globally exponentially stable. 
\begin{definition}\label{def:gam}
%If  $BA(t) - D$ is exponentially stable 
Assume that for all $t\geq 0$, there exist finite $c(t),\lambda(t)>0$ such that
\begin{equation}\label{eq:c}
\|BA(t) - D\| \leq c(t) e^{-\lambda(t) t} \ \ \forall t\geq 0.
\end{equation}
We then define 
\begin{align}\label{eq:gam2}
\begin{split}
 \gamma_1 &:= \sup_{t\geq0}\int_0^{\infty} c(t)^2 e^{-2\lambda(t) \tau}d\tau \\
 &\geq \left\|\int_0^{\infty} e^{(BA(t) - D)^T\tau}e^{(BA(t) - D)\tau}d\tau\right\|.
\end{split}
\end{align}
%\begin{split}
% \gamma_1 &:= \int_0^{\infty} c^2 e^{-2\lambda_o \tau}d\tau \\
% &\geq \left\|\int_0^{\infty} e^{(BA(t) - D)^T\tau}e^{(BA(t) - D)\tau}d\tau\right\| =\|Q(t)\|,
%\end{split}
%\end{align}
%with $c = \sup_{t\geq0} c_t$ and $\lambda_o = \inf_{t\geq0} \lambda_t$.
\end{definition}
%\begin{lemma}\label{lem:gamma}
%If $\sup_{t\geq0} s_1(BA(t)-D)<0$, and $A(t)$ is continuously differentiable then  $c$, $\lambda_o$, and $\gamma_1$, as defined in Definition \ref{def:gam}, are well-defined and finite.
%\end{lemma}
%\begin{proof}
%Since $\sup_{t\geq0} s_1(BA(t)-D)<0$, we have $BA(t)-D$ is exponentially stable for each $t\in  [0,\infty)$ and so there exist finite $c_t,\lambda_t>0$ such that \eqref{eq:c} holds. 
%Note also since $A(t)$ is continuously differentiable and $B$ and $D$ are constant, $BA(t)-D$ is continuously differentiable. Since the function $s_1(\cdot)$ is continuous, $s_1(BA(t)-D)$ is continuous. Therefore the limit, $\lim_{t\rightarrow \infty} s_1(BA(t)-D)$ is well defined. Since $\sup_{t\geq0} s_1(BA(t)-D)<0$, by the definition of the supremum on the set $t\in [0,\infty]$, $\lim_{t\rightarrow \infty} s_1(BA(t)-D)<0$. Therefore there exist finite $c_{\infty},\lambda_{\infty}>0$ such that \eqref{eq:c} holds in the limiting case. Therefore $c = \sup_{t\geq0} c_t$ and $\lambda_o = \inf_{t\geq0} \lambda_t$ are well-defined and finite. 

%Alternate argument: Si %ly differentiable. 
%Therefore since $\sup_{t\geq0} s_1(BA(t)-D)<0$, $c = \sup_{t\geq0} c_t$ and $\lambda_o = \inf_{t\geq0} \lambda_t$ are well-defined and finite. 
%\end{proof}
\begin{lemma}\label{lem:lin}
Consider the system in \eqref{eq:lin} with $A(t)$ continuously differentiable and $BA(t) - D$ bounded, that is, there exists an $L>0$ such that $\|BA(t) - D\|\leq L \ \forall t$. %, that is, there exists $L>0$ such that $\|BA(t) - D\|\leq L \ \forall t$.
Assume that $\sup_{t\geq0} s_1(BA(t)-D)<0$, and $\gamma_1$ in Definition \ref{def:gam}, is well-defined and finite. If $\sup_{t\geq 0} \|B\dot{A}(t) - D\| < \frac{1}{2\gamma_1^2}$ or $\int_t^{t+T}\|B\dot{A}(s) - D\|ds \leq \mu T + \alpha$ for small enough $\mu>0$, then the DFE is globally exponentially stable.
\end{lemma}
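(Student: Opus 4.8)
The plan is to construct a time--varying quadratic Lyapunov function built from the frozen--time solution of an algebraic Lyapunov equation, exactly in the spirit of the slowly--varying linear systems argument of Theorem 3.4.11 in \cite{ioannou2012robust}. Since $\sup_{t\geq0}s_1(BA(t)-D)<0$, each frozen matrix $BA(t)-D$ is Hurwitz, so for every fixed $t$ the algebraic Lyapunov equation
\begin{equation*}
(BA(t)-D)^T P(t) + P(t)(BA(t)-D) = -I
\end{equation*}
has the unique positive--definite solution
\begin{equation*}
P(t) = \int_0^\infty e^{(BA(t)-D)^T\tau}e^{(BA(t)-D)\tau}\,d\tau .
\end{equation*}
I would first establish uniform two--sided bounds $c_1 I \preceq P(t) \preceq \gamma_1 I$. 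The upper bound $\|P(t)\|\leq\gamma_1$ is immediate from Definition \ref{def:gam}. For the lower bound I would use $\|e^{(BA(t)-D)\tau}v\|\geq e^{-L\tau}\|v\|$, which follows from $\|BA(t)-D\|\leq L$, giving $v^TP(t)v=\int_0^\infty\|e^{(BA(t)-D)\tau}v\|^2\,d\tau\geq\|v\|^2/(2L)$, so $c_1=1/(2L)$.

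Next I would take $V(t,p)=p^TP(t)p$ as the Lyapunov candidate. Differentiating along \eqref{eq:lin} and using the algebraic equation,
\begin{equation*}
\dot{V} = p^T\bigl[(BA(t)-D)^TP(t)+P(t)(BA(t)-D)\bigr]p + p^T\dot{P}(t)p = -\|p\|^2 + p^T\dot{P}(t)p .
\end{equation*}
The crux of the proof is controlling $p^T\dot{P}(t)p$. I would differentiate the algebraic Lyapunov equation in $t$ --- note that $\tfrac{d}{dt}(BA(t)-D)=B\dot{A}(t)$, which is well defined since $A(t)\in C^1$ --- to obtain a second Lyapunov equation
\begin{equation*}
(BA(t)-D)^T\dot{P}(t)+\dot{P}(t)(BA(t)-D) = -\bigl[(B\dot{A}(t))^TP(t)+P(t)B\dot{A}(t)\bigr],
\end{equation*}
whose integral solution yields $\|\dot{P}(t)\|\leq 2\gamma_1^2\|B\dot{A}(t)\|$ after using $\|P(t)\|\leq\gamma_1$ and $\int_0^\infty\|e^{(BA(t)-D)\tau}\|^2\,d\tau\leq\gamma_1$. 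Since $P(t)$ and hence $\dot P(t)$ are symmetric, this gives $\dot{V}\leq -\bigl(1-2\gamma_1^2\|B\dot{A}(t)\|\bigr)\|p\|^2$.

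From here the two cases split. Under the first hypothesis $\sup_t\|B\dot{A}(t)\|<1/(2\gamma_1^2)$, the coefficient is uniformly positive, so $\dot{V}\leq -\kappa\|p\|^2\leq -(\kappa/\gamma_1)V$, and the standard Lyapunov exponential--stability theorem (Theorem 8.5 in \cite{khalil1996nonlinear}) gives GES; globality is automatic because \eqref{eq:lin} is linear. For the second hypothesis I would instead use $\|p\|^2\geq V/\gamma_1$ together with $\|p\|^2\leq V/c_1$ to obtain the differential inequality $\dot{V}\leq\bigl(-1/\gamma_1 + (2\gamma_1^2/c_1)\|B\dot{A}(t)\|\bigr)V$, integrate it via the comparison lemma, and invoke the average bound $\int_t^{t+T}\|B\dot{A}(s)\|\,ds\leq\mu T+\alpha$ to show the accumulated exponent is dominated by a strictly negative linear term once $\mu$ is small enough, again yielding exponential decay.

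I expect the main obstacle to be the bound on $\dot{P}(t)$: one must justify that $P(t)$ is differentiable in $t$ (continuity of the frozen Lyapunov solution in the data, guaranteed by $A\in C^1$ and uniform Hurwitzness), differentiate the algebraic equation correctly, and pass the norm through the integral representation so as to land the factor $2\gamma_1^2$ that exactly matches the threshold $1/(2\gamma_1^2)$ appearing in the hypothesis. The two--sided boundedness of $P(t)$ uniform in $t$ is the other ingredient that the boundedness assumption $\|BA(t)-D\|\leq L$ is there to supply.
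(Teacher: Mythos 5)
Your proposal is, in structure, exactly the paper's own proof: the frozen--time Lyapunov equation with solution $P(t)=\int_0^\infty e^{(BA(t)-D)^T\tau}e^{(BA(t)-D)\tau}\,d\tau$, the two--sided bounds $\tfrac{1}{2L}\|p\|^2\le p^TP(t)p\le\gamma_1\|p\|^2$ (the paper calls the lower constant $\gamma_0$), the identity $\dot V=-\|p\|^2+p^T\dot P(t)p$, the bound on $\|\dot P(t)\|$ obtained by differentiating the Lyapunov equation and using its integral representation, and the comparison--lemma argument with $\bar\lambda=\tfrac{1}{\gamma_1}-2\tfrac{\gamma_1^2}{\gamma_0}\mu$ for the averaged condition. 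The one place you diverge is, notably, a place where your calculus is right and the paper's is not: you differentiate $BA(t)-D$ correctly to get $B\dot A(t)$, whereas the paper carries the constant $D$ through the derivative and works with $B\dot A(t)-D$; that spurious $-D$ propagates into the paper's bound $\|\dot Q(t)\|\le 2\gamma_1^2\|B\dot A(t)-D\|$ and into the hypotheses of the lemma as stated.

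This discrepancy is not cosmetic, so be aware of what you have actually proved. Your argument yields GES under $\sup_{t\ge0}\|B\dot A(t)\|<\tfrac{1}{2\gamma_1^2}$ (or the averaged analogue with $\|B\dot A(s)\|$ in the integrand), while the statement assumes $\sup_{t\ge0}\|B\dot A(t)-D\|<\tfrac{1}{2\gamma_1^2}$. Neither condition implies the other in general: by the triangle inequality they differ by at most $\|D\|$ in either direction, and subtracting the nonnegative diagonal $D$ from a zero--diagonal matrix can strictly \emph{decrease} the spectral norm (e.g.\ for $M$ the adjacency matrix of the complete graph on three nodes, with eigenvalues $2,-1,-1$, and $D=\tfrac12 I$, one has $\|M-D\|=1.5<2=\|M\|$), so the stated hypothesis can hold while yours fails. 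As a proof of the literal statement there is therefore a gap at exactly this step; as mathematics, your version is the corrected one, since the paper's own derivation of its hypothesis rests on the invalid identity $\tfrac{d}{dt}(BA(t)-D)=B\dot A(t)-D$. If you want your proof to discharge the lemma as written, you must either bridge the two conditions (which, per the example above, cannot be done in general) or note explicitly that the hypothesis should read $\|B\dot A(t)\|$ in place of $\|B\dot A(t)-D\|$.
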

\begin{proof}
Since $\sup_{t\geq0} s_1(BA(t)-D)<0$, we have $BA(t)-D$ is Hurwitz for all $t\geq 0$ and therefore \eqref{eq:lin} is exponentially stable for all $t\geq 0$.  %and finite by Lemma \ref{lem:gamma}. %So by Proposition 2 in \cite{rantzer2011distributed} the system is diagonally stable, meaning there exists a diagonal $Q(t)>0$ such that 
This also implies that for any given $t$, there exists a symmetric, positive definite $Q(t)$ (by Theorem 4.6 %on pg. 136
of \cite{khalil1996nonlinear}) such that
\begin{equation}\label{eq:r}
Q(t)(BA(t) - D) + (BA(t) - D)^TQ(t) = -I.
\end{equation} 
Note that the solution to this equation is given by
\begin{equation}\label{eq:q}
Q(t) = \int_0^{\infty} e^{(BA(t) - D)^T\tau }e^{(BA(t) - D)\tau}d\tau.
\end{equation}
By our assumption, there exists an $L>0$ such that $\|BA(t) - D\|\leq L \ \forall t$, which implies
\begin{align*}
 \|p\| &= \Big\|e^{-(BA(t) - D)\tau}e^{(BA(t) - D)\tau}p\Big\|  \\
 %&\leq   \left\| e^{-(BA(t) - D)\tau}\right\| \left\|e^{(BA(t) - D)\tau}p \right\| \\ 
 &\leq    e^{\|BA(t) - D\|\tau}  \left\|e^{(BA(t) - D)\tau}p \right\| \\
 &\leq    e^{L\tau}  \left\|e^{(BA(t) - D)\tau}p \right\|,
\end{align*}
where the first inequality follows from the Cauchy-Schwartz inequality and some manipulation of the matrix exponential.
Therefore $\|e^{(BA(t) - D)\tau}p\| \geq e^{-L\tau} \|p\|$, so from \eqref{eq:q} we have %which is positive. This implies%, by \cite{khalil1996nonlinear} (see proof of Theorem 4.11),
\begin{equation}\label{eq:qlb}
\begin{split}
p^T Q(t)p 
 &= \int_0^{\infty} p^T e^{(BA(t) - D)^T\tau }e^{(BA(t) - D)\tau} p d\tau \\
 &=\int_0^{\infty} \|e^{(BA(t) - D)\tau} p\|^2 d\tau \\
 &\geq %\int_0^{\infty} e^{-2L \tau} d\tau \\
 %&= 
 \gamma_0 \|p\|^2, 
\end{split}
\end{equation}
where $\gamma_0 := \int_0^{\infty}  e^{-2L \tau}d\tau = \frac{1}{2L}$.

Consider $V(p,t) = p^T Q(t) p$. %Since $BA(t) - D$ is positive definite and symmetric,  by the Rayleigh-Ritz Theorem (or Rayleigh Quotient) \cite{horn2012matrix}
By \eqref{eq:qlb},  \eqref{eq:gam2} and \eqref{eq:q} we have
\begin{equation}\label{eq:ray}
%\lambda_n(Q(t))\|p\|^2 \leq p^T Q(t) p \leq \lambda_1(Q(t))\|p\|^2.
\gamma_0 \|p\|^2 \leq p^T Q(t) p \leq \, \gamma_1\|p\|^2,
\end{equation}
where $\gamma_1$ is well-defined and finite by assumption.
Taking the time derivative of $V(p,t)$ gives
\begin{align}\label{eq:vdot}
\begin{split}
 \dot{V}%(p,t)    
 &=  p^T( Q(t)(BA(t) - D) + (BA(t) - D)^TQ(t) + \dot{Q}(t)) p \\
 &= -\|p\|^2 + p^T \dot{Q}(t) p,
 \end{split}
\end{align}
where the second equality follows from \eqref{eq:r}. 
%Therefore if we can bound $ \dot{Q}(t)$ to be smaller than one then we will have exponential convergence. 
%our Lyapunov equation because, 
Taking the time derivative of \eqref{eq:r} gives,
\begin{equation*}
\dot{Q}(t)(BA(t) - D) + Q(t)(B\dot{A}(t) - D)  + (BA(t) - D)^T \dot{Q}(t) + (B\dot{A}(t) - D)^T Q(t) = 0, 
\end{equation*}
which implies
\begin{equation}\label{eq:der}
\dot{Q}(t)(BA(t) - D) + (BA(t) - D)^T \dot{Q}(t) = - Q(t)(B\dot{A}(t) - D) - (B\dot{A}(t) - D)^T Q(t) =:R(t).
\end{equation}
Note
\begin{equation}\label{eq:rnorm}
\|R(t)\| \leq 2\|Q(t)(B\dot{A}(t) - D)\| \leq 2\|Q(t)\| \|B\dot{A}(t) - D\|.
\end{equation}
The solution to \eqref{eq:der} is
\begin{equation*}
\dot{Q}(t) = \int_0^{\infty} e^{(BA(t) - D)^T\tau}R(t) e^{(BA(t) - D)\tau}d\tau.
\end{equation*}
Therefore 
\begin{align}\label{eq:gammas}
 \begin{split}
  \|\dot{Q}(t)\| &\leq \|R(t)\| \left\|\int_0^{\infty} e^{(BA(t) - D)^T\tau} e^{(BA(t) - D)\tau}d\tau\right\|  \\
  &\leq    \gamma_1 \|R(t)\|  \\ 
  &\leq    \gamma_1 (2\|Q(t)\| \|B\dot{A}(t)-D\|) \\
  &\leq    2\gamma_1^2\|B\dot{A}(t) - D\|,
 \end{split}
\end{align}
where the second and last inequalities hold by \eqref{eq:gam2} and the third holds by \eqref{eq:rnorm}.
Substituting \eqref{eq:gammas} into \eqref{eq:vdot} and using \eqref{eq:r} gives
\begin{align}\label{eq:vdot2}
 \begin{split}
  \dot{V}(p,t) &\leq -\|p\|^2 + 2\gamma_1^2\|B\dot{A}(t) - D\| \|p\|^2 \\ 
  &= -(1-  2\gamma_1^2\|B\dot{A}(t) - D\|) \|p\|^2.
 \end{split}
\end{align}
Thus for  $\sup_{t>0} \|B\dot{A}(t) - D\| < \frac{1}{2\gamma_1^2}$,  the origin is GES.

Otherwise, using %\eqref{eq:gam2}, define $\gamma_3 = \sup_{t>0}\lambda_n(Q(t))$. Using this and
\eqref{eq:ray}, we can rewrite \eqref{eq:vdot2} as
\begin{align*}
  \dot{V}(p,t) &\leq -\frac{1}{\gamma_1}V(p,t) + 2\frac{\gamma_1^2}{\gamma_0}\|B\dot{A}(t) - D\|V(p,t) \\ 
  &= -\left( \frac{1}{\gamma_1} - 2\frac{\gamma_1^2}{\gamma_0}\|B\dot{A}(t) - D\|\right) V(p,t).
\end{align*}
By the Comparison Principle (See e.g. Section 3.4 in \cite{khalil1996nonlinear}), we have
\begin{align*}
  V(t) &\leq e ^{-\int_{t_0}^t (\frac{1}{\gamma_1} - 2\frac{\gamma_1^2}{\gamma_0}\|B\dot{A}(t) - D\|)ds}V(t_0) \\ 
  &\leq e ^{2\frac{\gamma_1^2}{\gamma_0}\alpha}e^{-(\frac{1}{\gamma_1} - 2\frac{\gamma_1^2}{\gamma_0}\mu)(t-t_0)}V(t_0).
\end{align*}
Therefore, since $\int_t^{t+T}\|B\dot{A}(s) - D\|ds \leq \mu T + \alpha$ by our assumption, if  $\mu<\frac{\gamma_0}{2\gamma_1^3}$ then, with $\bar{c} = e ^{2\frac{\gamma_1^2}{\gamma_0}\alpha}$ and $\bar{\lambda} = \frac{1}{\gamma_1} - 2\frac{\gamma_1^2}{\gamma_0}\mu$, 
\begin{equation*}
\|p(t)\| \leq \bar{c}e^{-\bar{\lambda}(t-t_0)}\|p(t_0)\|,
\end{equation*}
that is, the origin is globally exponentially stable.
\end{proof}

\begin{theorem}\label{thm:DFEasymp}
Consider the system in \eqref{eq:tv} with $A(t)$ continuously differentiable and $BA(t) - D$ bounded, that is, there exists an $L>0$ such that $\|BA(t) - D\|\leq L \ \forall t$. %, that is, there exists $L>0$ such that $\|BA(t) - D\|\leq L \ \forall t$.
Assume that $\sup_{t\geq0} s_1(BA(t)-D)<0$, and $\gamma_1$ in Definition \ref{def:gam}, is well-defined and finite. If $\sup_{t\geq 0} \|B\dot{A}(t) - D\| < \frac{1}{2\gamma_1^2}$ or $\int_t^{t+T}\|B\dot{A}(s) - D\|ds \leq \mu T + \alpha$ for small enough $\mu>0$, then the DFE is globally exponentially stable.
\end{theorem}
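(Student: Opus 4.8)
The plan is to bootstrap off Lemma \ref{lem:lin}: the hypotheses here are \emph{identical} to those there, so the associated linear system \eqref{eq:lin} is already known to be globally exponentially stable. The task is therefore to transfer this conclusion from the linear system \eqref{eq:lin} to the full nonlinear system \eqref{eq:tv}, whose only difference is the extra term $-P(t)BA(t)p$. I would exploit the fact, emphasized throughout the paper, that this extra term is a \emph{beneficial} (stabilizing) perturbation on the non-negative orthant, rather than trying to absorb it into a Lyapunov estimate.

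First I would record the structural facts. By Lemma \ref{lem:pos}, the trajectory of \eqref{eq:tv} satisfies $p(t)\geq 0$ entrywise for all $t\geq t_0$. Since $B\geq 0$ and $A(t)\geq 0$ entrywise with $p\geq 0$, the vector $P(t)BA(t)p$ is entrywise non-negative; hence the right-hand side of \eqref{eq:tv} is dominated entrywise by that of \eqref{eq:lin}, i.e. $(BA(t)-D-P(t)BA(t))p \leq (BA(t)-D)p$. Moreover $BA(t)-D$ is Metzler (its off-diagonal entries coincide with those of $BA(t)\geq 0$), so the linear system \eqref{eq:lin} is cooperative and leaves the non-negative orthant invariant.

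Next I would introduce $\tilde{p}(t)$, the solution of \eqref{eq:lin} with the same initial condition $\tilde{p}(t_0)=p(t_0)\geq 0$. Because \eqref{eq:lin} is cooperative and its right-hand side dominates that of \eqref{eq:tv} entrywise, the comparison theorem for cooperative (monotone) systems yields $0\leq p(t)\leq \tilde{p}(t)$ entrywise for all $t\geq t_0$. Since the Euclidean norm is monotone on the non-negative orthant, this gives $\|p(t)\|\leq\|\tilde{p}(t)\|$. Finally, applying the exponential bound for $\tilde{p}$ supplied by Lemma \ref{lem:lin}, namely $\|\tilde{p}(t)\|\leq \bar{c}\,e^{-\bar{\lambda}(t-t_0)}\|\tilde{p}(t_0)\|$, and using $\tilde{p}(t_0)=p(t_0)$, I obtain $\|p(t)\|\leq \bar{c}\,e^{-\bar{\lambda}(t-t_0)}\|p(t_0)\|$, which is exactly global exponential stability of the DFE.

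The main obstacle is justifying the monotone comparison step rather than the arithmetic. The naive alternative---reusing the Lyapunov function $V(p,t)=p^TQ(t)p$ from Lemma \ref{lem:lin}---stumbles because the extra contribution to $\dot{V}$ is $-2p^TQ(t)P(t)BA(t)p$, and for a general positive-definite $Q(t)$ this cross term is not sign-definite, so the perturbation cannot be shown to help at the Lyapunov level (this is precisely why the symmetric Theorem \ref{thm:atorigin}, which uses the unweighted $V=\frac{1}{2}p^Tp$, goes through directly but the directed case does not). Consequently I would verify carefully that the cooperativity (Kamke) hypotheses hold---Metzler $BA(t)-D$, entrywise domination, and forward invariance of the orthant for both systems---and that both solutions exist and remain in the orthant on $[t_0,\infty)$, so that both the comparison inequality and the norm-monotonicity are legitimate.
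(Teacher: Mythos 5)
Your proof is correct, and it follows the same overall strategy as the paper's own proof: decompose the right-hand side of \eqref{eq:tv} as the linear field of \eqref{eq:lin} plus the entrywise non-positive perturbation $-P(t)BA(t)p$, bound the nonlinear solution by the linear one, and then invoke Lemma \ref{lem:lin} to get the exponential decay. The one place you genuinely diverge is in how the comparison step is justified, and here your version is actually the more careful one. The paper passes from the entrywise differential inequality $\dot{p}\leq(BA(t)-D)p$ to the solution bound by citing Gr\"{o}nwall's inequality; but Gr\"{o}nwall is a scalar tool, and for vector-valued differential inequalities the implication ``dominated vector field $\Rightarrow$ dominated solution'' is false in general --- it requires quasimonotonicity (the Kamke condition) of the dominating system. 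You supply exactly this: $BA(t)-D$ is Metzler because its off-diagonal entries are those of $BA(t)\geq 0$, so \eqref{eq:lin} is cooperative, the comparison theorem for monotone systems applies, and norm-monotonicity on the non-negative orthant finishes the argument. In effect you have filled in the step the paper glosses over. Your side remark diagnosing why the weighted Lyapunov function $V(p,t)=p^TQ(t)p$ from Lemma \ref{lem:lin} cannot simply absorb the term $-2p^TQ(t)P(t)BA(t)p$ (it is not sign-definite for a general positive definite $Q(t)$, unlike in the symmetric setting of Theorem \ref{thm:atorigin} where $Q=\tfrac{1}{2}I$) is also correct, and explains why both you and the paper route the argument through a comparison with the linear system rather than through a direct Lyapunov estimate.
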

\begin{proof}
%Following the proof from \cite{khanafer2014stability}, 
%Let $M=BA(t^\star)-D$, where $t^\star = arg\sup_{t\geq0} \lambda_1(BA(t)-D)$. 
Note that since $(P(t)BA(t))_{ij}\geq 0 \ \forall i,j$, by construction and Lemma \ref{lem:pos},
\begin{align*}
\dot{p} &= (BA(t) - P(t)BA(t) - D)p  \\
 &\leq  (BA(t) - D)p.
\end{align*}
Therefore, by Gr\"{o}nwall's Inequality (\cite{gronwall1919note}), the solution of the original system will be bounded above by the solution of the linear system. %So if $\dot{p}=(BA(t) - D)p$ can be shown to be GAS, then the proof is complete. 
Thus by Lemma \ref{lem:lin}, the DFE is GES for the system in \eqref{eq:tv}.
\end{proof}

\subsection{Stochastic Model}

In this section we explore introducing randomness %into the model 
using two different models, a generic additive noise model and an Ito's formula-type model. %We will enforce a restriction on these models that $p_i(t)\in [0,1] \ \forall i,t$; since they are interpreted as a probability of infection, outside of this set they are nonsensical. 
Note that  the mean-field step in \eqref{eq:mean}, in essence, removes the randomness that was included in the original $2^n$ model. Therefore, an exploration of random graph structures, while interesting as an extension of the $n$-intertwined Markov model in \eqref{eq:tv}, does not accurately approximate the  $2^n$ model with random graph structure. However, the data coming in could be thought of as a signal, %the first part of \eqref{eq:noise}, 
with additive noise, which is reflected in the models presented in this section. %, the second part of \eqref{eq:noise}. 
% First, we explore random graph structure for the discrete case. Second, we will explore by using an Ito's formula-type formulation of \eqref{eq:tv}. %If we let the matrix $A(t)$ be a continuous random variable we get a similar result. We will momentarily consider the discrete time case for simplicity.
\subsubsection{Generic Noise}%Stochastic Barbalat's Lemma} 
%Now, returning to the continuous time case with a slight modification, we shall see a similar result. 
Consider the system
\begin{equation}\label{eq:noise}
\dot{p}(t) = \underbrace{(BA(t) - P(t)BA(t) - D)p(t)}_{F(t,p)} + g(t,p)\xi(t,\omega),
\end{equation}
which represents a perturbation to the model in \eqref{eq:tv}, where $\xi(t,\omega)\in \mathbb{R}^{k}$ is a zero mean, measurable stochastic process, $A(t)$, $B$,  $D$, and $g(t,p)$ are deterministic, $g(t,p) \in \mathbb{R}^{n \times k}$, and $g(t,0) = 0$ for all $t$. 
 We assume that $p_i(0) \geq 0$ for all $i=1,...,n$.
%Note that since for the noise free model $p_i(t) \geq 0$ and since $g(t,0) =0$
%{\em Measurable stochastic process} means that 
\begin{lemma}\label{lem:posSto}
Consider the system in \eqref{eq:noise} with $p_i(0)\geq 0$, for all $i=1,...,n$. If $\xi(t,\omega)\in \mathbb{R}^{k}$ is a zero mean, measurable stochastic process and, for all $i = 1,...,n$ there exists a $k_i>0$ such that   $\|g_i(t,p)\| \leq k_i |p_i|^2$ for all $t\geq 0$, then $p_i(t)\geq 0$ for all $t\geq0$, $i=1,...,n$.
\end{lemma}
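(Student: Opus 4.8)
The plan is to mimic the argument of Lemma~\ref{lem:pos}, but carried out pathwise in the realization $\omega$, the only genuinely new ingredient being the growth hypothesis on $g$. First I would record its key consequence: whenever $p_i(t)=0$, the bound $\|g_i(t,p)\|\leq k_i|p_i|^2 = 0$ forces $g_i(t,p)=0$, so the $i$th component of the forcing term, $g_i(t,p)\xi(t,\omega)$, vanishes identically regardless of the value of $\xi$. Thus on the face $\{p_i=0\}$ of the non-negative orthant the $i$th equation in \eqref{eq:noise} collapses exactly onto the deterministic dynamics \eqref{eq:vani} analyzed in Lemma~\ref{lem:pos}.

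With that reduction in hand, I would fix an arbitrary $\omega$ and treat \eqref{eq:noise} as an ODE driven by the (measurable) signal $\xi(\cdot,\omega)$, whose solution $p(\cdot,\omega)$ is absolutely continuous in $t$. Following the two cases of Lemma~\ref{lem:pos}, suppose $p_i$ reaches the value $0$ for the first time at some $T\geq 0$ while $p_j(T)\geq 0$ for all $j$. Evaluating the $i$th equation at $T$, and using that the noise term vanishes there, gives
\begin{equation*}
\dot{p}_i(T) = \beta_i\sum_{j=1}^n a_{ij}p_j(T) \geq 0,
\end{equation*}
since $\beta_i\geq 0$, $a_{ij}\geq 0$, and $p_j(T)\geq 0$. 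Exactly as before, this non-negative derivative drives $p_i$ back into the non-negative region, so no component can cross into negativity; hence $p_i(t)\geq 0$ for all $t\geq 0$ and all $i$, and since $\omega$ was arbitrary the conclusion holds for every realization.

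The step I expect to require the most care is the stochastic setting itself. Because $\xi$ is assumed only to be measurable (neither a smooth driving signal nor a semimartingale), I would interpret \eqref{eq:noise} pathwise and verify that, for each $\omega$, the solution is absolutely continuous in $t$, so that the continuity-based boundary argument of Lemma~\ref{lem:pos} is genuinely available; the quadratic vanishing of $g$ near the boundary is precisely what keeps the forcing well-behaved enough for this. No stochastic calculus is needed, since $g$ is deterministic and the positivity property is a purely pathwise statement.
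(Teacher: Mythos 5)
Your proposal is correct and takes essentially the same approach as the paper's proof: both exploit the bound $\|g_i(t,p)\|\leq k_i|p_i|^2$ to conclude that the stochastic forcing vanishes on the face $\{p_i=0\}$, and then fall back on the deterministic boundary argument of Lemma~\ref{lem:pos} to get $\dot{p}_i\geq 0$ there. Your pathwise, first-hitting-time phrasing is just a slightly more explicit rendering of what the paper expresses via the limit $p_i\to 0$ of $\|g_i(t,p)\xi(t,\omega)\|$.
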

\begin{proof}
%First we will show that if $p(0)\geq 0$ then $p(t)\geq 0$ for all $t\geq 0$. 
By Lemma \ref{lem:pos}, the deterministic part of \eqref{eq:noise}, $F(t,p)$, is non-negative for all $t\geq 0$. Therefore we turn our attention to the $g_i(t,p)\xi(t,\omega)$ term, where $i$ refers to the $i$th row of $g(t,p)$. By the zero mean and independence assumptions, $\xi(t,\omega)$ can be negative for any $t\geq 0$. However, by our assumption that $\|g_i(t,p)\| \leq k_i |p_i|^2$ for all $t$, we have%; so as $p(t)$ approaches zero so will $g(t,p)$. Therefore
, for any $t\geq 0$,
\begin{align*}
    \lim_{p_i \rightarrow 0} \|g_i(t,p)\xi(t,\omega)\| 
    &\leq \lim_{p \rightarrow 0}\|g_i(t,p)\|  \|\xi(t,\omega)\| \\
    & \leq \lim_{p_i \rightarrow 0} k_i |p_i|^2 \|\xi(t,\omega)\| \\
    & = 0.
\end{align*}
Therefore if $p_i(0)\geq 0$ then as $p_i$ approaches zero, the random part of the derivative vanishes. Therefore if $p_i(t) = 0$ then $\dot{p}_i(t)\geq 0$, and consequently $p_i(t)\geq 0$ for all $t\geq 0$. 
\end{proof}

We have the following result: 
\begin{theorem} \label{thm:noise}
Consider the system in \eqref{eq:noise} with $\beta_i=\beta$ $\forall i$, $A(t)$  symmetric, piecewise continuous in $t$, and bounded, and  $\sup_{t\geq0} \lambda_1(BA(t)-D)<0$. % and $g(t,p) = g(t)$. %the maximum eigenvalue of $BA(t)-D$ is always less than zero, that is 
If $\xi(t,\omega)\in \mathbb{R}^{k}$ is a zero mean, measurable stochastic process and, for all $i = 1,...,n$ there exists a $k_i>0$ such that   $\|g_i(t,p)\| \leq k_i |p_i|^2$ for all $t\geq 0$,  then the DFE is globally exponentially stable in expectation.
\end{theorem}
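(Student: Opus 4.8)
The plan is to mirror the deterministic argument of Theorem \ref{thm:atorigin}, but now to track the evolution of the expected Lyapunov energy $E[V(p(t))]$ rather than $V(p(t))$ itself. I would again take $V(p)=\tfrac12 p^Tp$. The first observation is that Lemma \ref{lem:posSto} guarantees $p(t)\geq 0$ for all $t\geq 0$ under the stated bound $\|g_i(t,p)\|\leq k_i|p_i|^2$; this nonnegativity is exactly what is needed so that the estimate $(P(t)BA(t))_{ij}\geq 0$ from the proof of Theorem \ref{thm:atorigin} remains valid sample-path-wise. Hence, writing $F(t,p)=(BA(t)-P(t)BA(t)-D)p$ as in \eqref{eq:noise}, the symmetry of $A(t)$ and the homogeneity $\beta_i=\beta$ give, for every realization,
\[
p^TF(t,p)\;\leq\;\lambda_1(BA(t)-D)\,\|p\|^2\;\leq\;\Big(\sup_{t\geq 0}\lambda_1(BA(t)-D)\Big)\|p\|^2
\]
by the Rayleigh--Ritz quotient, exactly as in Theorem \ref{thm:atorigin}.

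Next I would differentiate $V$ along \eqref{eq:noise} and separate the deterministic and stochastic contributions,
\[
\dot V(p(t)) \;=\; p^TF(t,p) \;+\; p^T g(t,p)\,\xi(t,\omega).
\]
Setting $\eta:=-\sup_{t\geq0}\lambda_1(BA(t)-D)>0$ and taking expectations (justifying the interchange of $\tfrac{d}{dt}$ and $E[\cdot]$ via the boundedness of $A(t)$, $B$, $D$ and measurability of the process), the deterministic term is controlled by $E[p^TF(t,p)]\leq -2\eta\,E[V(p(t))]$. The crux is the cross term $E\big[p^Tg(t,p)\,\xi(t,\omega)\big]$: conditioning on the natural filtration $\mathcal F_t$ generated by the noise history, the state $p(t)$ is $\mathcal F_t$-measurable while $\xi(t,\omega)$ has conditional zero mean, so the tower property yields $E\big[p^Tg(t,p)\,\xi\big]=E\big[p^Tg(t,p)\,E[\xi\mid\mathcal F_t]\big]=0$. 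This is the same zero-mean/independence structure already invoked in the proof of Lemma \ref{lem:posSto}. Consequently $\tfrac{d}{dt}E[V(p(t))]\leq -2\eta\,E[V(p(t))]$.

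Finally I would close the argument with the Comparison Principle (Section 3.4 of \cite{khalil1996nonlinear}), equivalently Gr\"onwall's inequality \cite{gronwall1919note}, giving $E[V(p(t))]\leq e^{-2\eta t}E[V(p(0))]$, i.e.
\[
E\big[\|p(t)\|^2\big]\;\leq\; e^{-2\eta t}\,E\big[\|p(0)\|^2\big],
\]
which is global exponential stability of the DFE in expectation (in the mean-square sense). The main obstacle I anticipate is the rigorous vanishing of the cross term: this hinges on a clean specification of the probabilistic model---namely that $\xi(t,\omega)$ is conditionally zero-mean given the past and that $p(t)$ is adapted---together with integrability ensuring that the derivative and expectation can be exchanged. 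The quadratic bound $\|g_i(t,p)\|\leq k_i|p_i|^2$ is the feature that makes both this exchange and the nonnegativity of Lemma \ref{lem:posSto} work near the origin, since it forces the noise contribution to vanish at least quadratically as $p\to 0$.
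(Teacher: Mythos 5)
Your proposal is correct and follows essentially the same route as the paper: the same Lyapunov function $V(p)=\tfrac12 p^Tp$, nonnegativity of $p(t)$ via Lemma \ref{lem:posSto}, the Rayleigh--Ritz bound on $p^T(BA(t)-D)p$ exactly as in Theorem \ref{thm:atorigin}, and the cross term killed by the zero-mean/independence structure of $\xi(t,\omega)$. The only divergence is the closing step: the paper bounds the conditional expectation $E[\dot V(p)\,|\,p]$ and then invokes Theorem 8.5 of \cite{khalil1996nonlinear} ``in expectation,'' whereas you derive the differential inequality $\tfrac{d}{dt}E[V]\leq -2\eta\,E[V]$ and integrate it by Gr\"onwall/Comparison, which is a slightly more explicit rendering of the same conclusion and makes the exponential rate appear directly.
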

\begin{proof}
%Note since $g(t,p) = g(t)$, $g(t)$ is independent of $w$. Also r
%Recall by definition since $w$ is a standard Wiener process $E[dw] = 0$.
Consider an arbitrary $p$ and define a  Lyapunov function $V(p) = 
\frac{1}{2}p^T p$.  For $p\neq0$,
\begin{align}
 E[\dot{V}(p)|p] &= E[p^T\dot{p}|p] \nonumber \\ 
 &= E[p^T(BA(t) - P(t)BA(t) - D)p   + p^Tg(t,p)\xi(t,\omega) |p] \nonumber \\
 &= p^T(BA(t) - P(t)BA(t) - D)p   + p^Tg(t,p)E[\xi(t,\omega)] \label{eq:thm33} \\
 &=  p^T(BA(t) - P(t)BA(t) - D)p \label{eq:thm34} \\
 &\leq  p^T(BA(t) - D)p \label{eq:thm35} \\
 &\leq  \lambda_1(BA(t)-D)\|p\|^2 \nonumber \\
 &\leq \left(\sup_{t\geq 0} \lambda_1(BA(t)-D)\right)\|p\|^2 <0, \nonumber
\end{align}
where \eqref{eq:thm33} and \eqref{eq:thm34} hold because, by assumption, $E[\xi(t,\omega)|p] = E[\xi(t,\omega)] = 0$, and \eqref{eq:thm35} holds by our assumption that $p_i(0) \geq 0$ for all $i=1,...,n$ and by Lemma \ref{lem:posSto}.
Thus, since the system is piecewise continuous in $t$ and locally Lipschitz in $p$ $\forall t,p \geq 0$, the system converges exponentially fast to the origin by Theorem 8.5 in \cite{khalil1996nonlinear} in expectation.
\end{proof}

We will use the result stated in Lemma \ref{lem:khas}, from \cite{khasminskii2011stochastic}, to prove Theorem \ref{thm:noise2}. Note that $d^0V/dt$ is defined as
\begin{equation}
\frac{d^0V}{dt} := \frac{\partial V}{\partial t} + \sum_{i=1}^n \frac{\partial V}{\partial p_i} F_i(t,p),
\end{equation}
where $F_i(t,p)$ is the $i$th entry of $F(t,p)$ as defined in \eqref{eq:noise}.
\begin{lemma}\label{lem:khas} \em{(Theorem 1.12, %pg. 24
in \cite{khasminskii2011stochastic})}
Consider the system \eqref{eq:noise} with a Lyapunov function $V(p,t)$ that is positive definite uniformly in $t$ and $V(0,t) = 0$. If $\xi(t,\omega)$ satisfies the strong law of large numbers, 
\begin{gather}
\sup_{t\geq 0} E|\xi(t,\omega)| < \frac{c_1}{bc_2},\label{eq:exp}\\
\frac{d^0V}{dt} \leq -c_1 V, \text{ and } \|g\| \leq c_2 V,\label{eq:Vbound}
\end{gather}
for some constants $c_1,c_2,b>0$, then the origin is almost surely asymptotically stable.
\end{lemma}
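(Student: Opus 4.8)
The statement is a specialization to the system \eqref{eq:noise} of a classical stochastic Lyapunov theorem, so the plan is to verify that the general argument behind Theorem 1.12 of \cite{khasminskii2011stochastic} goes through for the drift $F(t,p)$ and the noise coefficient $g(t,p)$ appearing in \eqref{eq:noise}, and then simply invoke that result. For transparency I would record the underlying argument as a \emph{pathwise} Lyapunov estimate driven by the strong law of large numbers, since that is what makes the hypotheses \eqref{eq:exp}--\eqref{eq:Vbound} natural.

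First I would evaluate the full derivative of $V$ along a trajectory $p(t)$ of \eqref{eq:noise}, separating the deterministic drift from the noise,
\[
\dot{V}(p(t),t) = \frac{d^0 V}{dt} + \frac{\partial V}{\partial p}\,g(t,p)\,\xi(t,\omega),
\]
where $\frac{d^0V}{dt}$ is exactly the deterministic derivative defined just before the lemma. Using the hypothesis $\frac{d^0V}{dt}\le -c_1 V$ together with $\|g\|\le c_2 V$ (with the constant $b$ absorbing the factor that relates $\|\partial V/\partial p\|$ to $V$ along trajectories), I would obtain the pathwise differential inequality
\[
\dot{V}(p(t),t) \le \big(-c_1 + b\,c_2\,|\xi(t,\omega)|\big)\,V(p(t),t).
\]

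Second, I would pass to logarithms: dividing by $V>0$, integrating from $0$ to $t$, and dividing by $t$ gives
\[
\frac{1}{t}\ln\frac{V(p(t),t)}{V(p(0),0)} \le -c_1 + b\,c_2\,\frac{1}{t}\int_0^t |\xi(s,\omega)|\,ds .
\]
Because $\xi$ satisfies the strong law of large numbers, the time average $\frac{1}{t}\int_0^t|\xi(s,\omega)|\,ds$ converges almost surely, with limit controlled by $\sup_{t\ge0}E|\xi(t,\omega)|$. Hypothesis \eqref{eq:exp}, namely $\sup_{t\ge0}E|\xi| < c_1/(bc_2)$, then forces the right-hand side to be asymptotically negative, so $\limsup_{t\to\infty}\frac{1}{t}\ln V(p(t),t) < 0$ almost surely. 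Hence $V(p(t),t)\to 0$ almost surely, and since $V$ is positive definite uniformly in $t$ with $V(0,t)=0$, this yields $p(t)\to 0$ almost surely, i.e. almost sure asymptotic stability of the DFE.

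The main obstacle, and the place where the cited theorem does the real measure-theoretic work, is making the stochastic steps rigorous: justifying the differentiation of $\ln V$ along a trajectory driven by the merely measurable process $\xi$, and upgrading the almost sure convergence of $\frac{1}{t}\int_0^t|\xi|\,ds$ supplied by the strong law of large numbers into the uniform-in-$t$ comparison with $\sup_t E|\xi|$ required by \eqref{eq:exp}. The deterministic ingredients (the decay $\frac{d^0V}{dt}\le -c_1V$ and the growth control $\|g\|\le c_2V$) are easy to state; the subtlety is controlling the accumulated noise so that the negative drift dominates \emph{almost surely} rather than merely in expectation. Since these details are precisely what \cite{khasminskii2011stochastic} establishes in full generality, I would close the argument by citing that reference once the hypotheses above are checked for \eqref{eq:noise}.
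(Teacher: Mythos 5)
Your proposal is essentially correct, but there is nothing in the paper to compare it against: the paper offers no proof of this lemma. It is imported verbatim as Theorem~1.12 of \cite{khasminskii2011stochastic}, and the paper's ``proof'' is the citation itself, after which the lemma is only \emph{used} (in Theorems \ref{thm:noise2} and \ref{thm:noise3}, whose proofs consist of checking the hypotheses \eqref{eq:exp}--\eqref{eq:Vbound}). What you have done differently is reconstruct the argument that lies behind Khasminskii's theorem: the pathwise estimate $\dot V \le \bigl(-c_1 + b\,c_2\,|\xi(t,\omega)|\bigr)V$ along trajectories of \eqref{eq:noise}, the passage to $\tfrac{1}{t}\ln V$, and the strong law of large numbers forcing the exponent to be asymptotically negative under \eqref{eq:exp}; this is indeed the standard proof of that result, and your sketch of it is sound. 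Your reconstruction also surfaces a genuine looseness in the statement as quoted in the paper: the constant $b$ appears in \eqref{eq:exp} but is tied to nothing in \eqref{eq:Vbound}. In Khasminskii's original formulation, $b$ bounds the factor through which the gradient $\partial V/\partial p$ enters the noise term, and your parenthetical remark (``$b$ absorbing the factor that relates $\|\partial V/\partial p\|$ to $V$'') correctly identifies this role, though a fully rigorous write-up would state that bound as an explicit hypothesis rather than absorb it silently. In short: your route buys transparency---the reader sees exactly why the SLLN hypothesis and the threshold $c_1/(b c_2)$ appear---at the cost of re-deriving a textbook result; the paper's route (bare citation) is the conventional treatment of a quoted lemma, and is also what your final paragraph falls back on for the measure-theoretic details, so the two are compatible rather than in conflict.
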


\begin{theorem} \label{thm:noise2}
Consider the system in \eqref{eq:noise} with $\beta_i=\beta$ $\forall i$, $A(t)$  symmetric, and  $ \sup_{t\geq 0} \lambda_1(BA(t)-D) <0$. If $\xi(t,\omega)$ is a zero mean, independent, identically distributed (i.i.d.), measurable stochastic process, and for all $i = 1,...,n$ there exists a $k_i>0$ such that   $\|g_i(t,p)\| \leq k_i |p_i|^2$ for all $t\geq 0$, 
then the origin is almost surely asymptotically stable.
\end{theorem}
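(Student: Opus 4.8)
The plan is to invoke Lemma \ref{lem:khas} with the quadratic Lyapunov function $V(p) = \frac{1}{2}p^Tp$, the same one used in Theorems \ref{thm:atorigin} and \ref{thm:noise}. This $V$ does not depend on $t$, so it is positive definite uniformly in $t$ and satisfies $V(0)=0$, meeting the standing hypotheses of the lemma. Since $\xi(t,\omega)$ is i.i.d. with zero mean, it is integrable, so the strong law of large numbers holds by Kolmogorov's theorem, and $\sup_{t\ge0}E|\xi(t,\omega)| = E|\xi|$ is a finite constant. It then remains to produce constants $c_1,c_2,b>0$ verifying \eqref{eq:exp} and \eqref{eq:Vbound}.

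First I would compute $d^0V/dt$. Because $V$ has no explicit time dependence and $\partial V/\partial p_i = p_i$, the definition gives $d^0V/dt = p^TF(t,p) = p^T(BA(t) - P(t)BA(t) - D)p$, involving only the deterministic drift. This is exactly the quantity bounded in the proofs of Theorems \ref{thm:atorigin} and \ref{thm:noise}: using Lemma \ref{lem:posSto} to guarantee $p_i(t)\ge0$ (so that $p^TP(t)BA(t)p\ge0$), the symmetry of $A(t)$ together with $\beta_i=\beta$, and the Rayleigh--Ritz quotient, one obtains $d^0V/dt \le \left(\sup_{t\ge0}\lambda_1(BA(t)-D)\right)\|p\|^2$. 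Writing $\|p\|^2 = 2V$, this reads $d^0V/dt \le -c_1 V$ with $c_1 := -2\sup_{t\ge0}\lambda_1(BA(t)-D)$, strictly positive by hypothesis.

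Next I would bound the diffusion gain. From $\|g_i(t,p)\|\le k_i|p_i|^2$ and the Frobenius bound $\|g\|\le\|g\|_F = (\sum_i\|g_i\|^2)^{1/2}$, together with the elementary inequality $\sum_i|p_i|^4 \le (\sum_i|p_i|^2)^2 = \|p\|^4$, I obtain $\|g(t,p)\| \le (\max_i k_i)\|p\|^2 = c_2 V$ with $c_2 := 2\max_i k_i$. This matches the required form $\|g\|\le c_2V$ in \eqref{eq:Vbound}; the key point is that both $\|g\|$ and $V$ are quadratic in $p$, so the bound closes.

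The remaining condition \eqref{eq:exp} is where care is needed: with $c_1$ and $c_2$ fixed as above, one must still enforce $\sup_{t\ge0}E|\xi(t,\omega)| < c_1/(bc_2)$, which is \emph{not} automatic from the stated hypotheses. My proposed resolution is to exploit the freedom in scaling the Lyapunov function. Replacing $V$ by $\tfrac{\eta}{2}p^Tp$ for $\eta>0$ leaves $c_1$ unchanged (both $d^0V/dt$ and $V$ scale by $\eta$) while replacing $c_2$ by $c_2/\eta$, so $c_1/(bc_2)$ can be made arbitrarily large for the fixed $b$ supplied by the lemma. Since $E|\xi|$ is a fixed finite number, choosing $\eta$ large enough enforces \eqref{eq:exp}, and Lemma \ref{lem:khas} then yields almost sure asymptotic stability of the origin. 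I expect this scaling step to be the main obstacle: the drift and diffusion bounds are essentially re-runs of the earlier computations, but verifying \eqref{eq:exp} hinges on both the integrability of $\xi$ and this rescaling argument.
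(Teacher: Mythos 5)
Your proposal tracks the paper's own proof almost step for step: the same Lyapunov function $V(p)=\frac{1}{2}p^Tp$, the same drift estimate $d^0V/dt \le -c_1 V$ with $c_1 = -2\sup_{t\ge0}\lambda_1(BA(t)-D)$ (via Lemma \ref{lem:posSto}, symmetry of $BA(t)-D$, and Rayleigh--Ritz), essentially the same Frobenius-norm computation giving $\|g\|\le c_2 V$, and the same appeal to the i.i.d.\ hypothesis for the strong law of large numbers. Where you diverge is the verification of \eqref{eq:exp}. The paper simply asserts that \eqref{eq:exp} ``is satisfied by the zero mean assumption'' (itself a questionable claim, since zero mean controls $E[\xi]$, not $E|\xi|$); you correctly flag that \eqref{eq:exp} is not automatic, which is a sharper reading than the paper's own.

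However, your proposed fix --- rescaling $V$ to $\frac{\eta}{2}p^Tp$ so as to blow up $c_1/(bc_2)$ --- does not work. The constant $b$ in Khasminskii's theorem is not a universal constant supplied independently of the Lyapunov function: it is the factor that converts the hypothesis $\|g\|\le c_2V$ into a bound on the noise term $\frac{\partial V}{\partial p}\,g\,\xi$ actually appearing in $\dot V$, i.e., it plays the role of a bound on $\|\partial V/\partial p\|$ over the region of interest. Under $V\mapsto \eta V$ the gradient scales by $\eta$ as well, so $b\mapsto \eta b$ while $c_2\mapsto c_2/\eta$, and the ratio $c_1/(bc_2)$ is exactly invariant; the rescaling buys nothing. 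This invariance is forced: if a mere rescaling of $V$ could enforce \eqref{eq:exp}, the noise-magnitude condition would be vacuous and would not appear in the lemma at all (equivalently, were $b$ truly a free parameter, one could take $b$ arbitrarily small and skip the rescaling entirely). The ingredient that genuinely closes the gap is the quadratic vanishing of the noise, $\|g_i(t,p)\|\le k_i|p_i|^2$: on a ball $\|p\|\le R$ one has $\bigl\|\tfrac{\partial V}{\partial p}\,g\bigr\| \le \sqrt{c}\,R\,\|p\|^2 = 2\sqrt{c}\,R\,V$, so the effective noise gain is $O(R)$ and \eqref{eq:exp} holds once $R$ is chosen small enough relative to the finite number $\sup_{t\ge0}E|\xi(t,\omega)|$ --- which is consistent with the (local) almost sure asymptotic stability being claimed. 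Your drift and diffusion estimates are fine; it is precisely the scaling step you flagged as ``the main obstacle'' that must be replaced by this smallness-near-the-origin argument.
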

\begin{proof}
%By Lemma \ref{lem:posSto}, if $p(0)\geq 0$ then, for all $t\geq 0$,
%\begin{equation*} \label{eq:pos}
%(P(t)BA(t))_{ij} \geq 0.
%\end{equation*}

Consider the Lyapunov function candidate $V(p) = \frac{1}{2}p^T p$. Clearly $V$ is  positive definite uniformly in $t$. Since $ \sup_{t\geq 0} \lambda_1(BA(t)-D) <0$, there exists $\delta >0$ such that for all $t\geq 0$, $ \sup_{t\geq 0} \lambda_1(BA(t)-D) \leq -\delta$. Therefore, for $p\geq0$,
\begin{align}
\frac{d^0V}{dt} &= p^T(BA(t) - P(t)BA(t) - D)p  \nonumber\\
 &\leq  p^T(BA(t) - D)p \label{eq:thm41}\\
 %&\leq  \lambda_1(BA(t)-D)\|p\|^2  \nonumber\\
 &\leq \left(\sup_{t\geq 0} \lambda_1(BA(t)-D)\right)\|p\|^2 \nonumber\\
 &\leq -\delta p^Tp = {-2\delta}{V},\nonumber
\end{align}
where \eqref{eq:thm41} follows from our assumption that $p_i(0) \geq 0$ for all $i=1,...,n$ and by Lemma \ref{lem:posSto}.
%Define $\sigma_r := \inf\{t \geq 0 : \|p(t)\|\geq r\}$ and have $\sigma_r \wedge t$ indicate the minimum of $\sigma_r$ and $t$. 

%%\begin{equation}
%\begin{array}{lcl}
%LV(p) &=& p^T \dot{p} + \dot{p}^T p + p^T ((I -P(t))BE[A(t)] - D)p + p^T p\\
%&=& p^T (2(I -P(t))BE[A(t)] + ((I -P(t))BE[A(t)])^T - 3D + \eta(t,p) + \eta(t,p)^T+ I)p \\
%&\leq & p^T(2BE[A(t)] + (BE[A(t)])^T - 3D + \eta(t,p) + \eta(t,p)^T + I)p \\
% &\leq & (3\sup_{t\geq 0} \lambda_1(BE[A(t)]-D) + 1 + \sup_{t\geq 0}\lambda_{1}(\eta(t,p) + \eta(t,p)^T))\|p\|^2  \\
% &\leq& -k\|p\|^2,
%\end{array}
%\end{equation}

%\noindent satisfying Equation \eqref{eq:lv}% (Generator from Example 3.6 in \cite{khasminskii2011stochastic} but this is not time dependent, dang it).
By our assumption $\|g_i(t,p)\| \leq k_i |p_i|^2$ we have that 
\begin{align}
    \|g(t,p)\|^2 &\leq \sum_{i=1}^n\|g_i(t,p)\|^2 \label{eq:g1} \\
    &\leq \sum_{i=1}^n (k_i|p_i|^2)^2 \nonumber\\
    &\leq c\sum_{i=1}^n (|p_i|^2)^2 \nonumber \\
    &\leq c\left(\sum_{i=1}^n |p_i|^2\right)^2 \label{eq:g4} \\
    &\leq c\|p\|^4,\label{eq:gend}
\end{align}
for all $t\geq 0$, where $c = n(\max_i k_i^2)$. Note \eqref{eq:g1} holds by the relationship between the 2-induced norm and the Frobenius norm and \eqref{eq:g4} holds because the cross terms $|p_i|^2|p_j|^2\geq0$.
Therefore $\|g(t,p)\|\leq \sqrt{c}V$; so \eqref{eq:Vbound} is satisfied.
Also, since $\xi(t,\omega)$ is i.i.d. it satisfies the strong law of large numbers and \eqref{eq:exp} is satisfied by the zero mean assumption. Therefore by Lemma \ref{lem:khas}, the origin is almost surely asymptotically stable.
%Note that 3) follows trivial from either 1) or 2).
\end{proof}

A similar result can be shown for the directed graph case. 

\begin{theorem} \label{thm:noise3}
Consider the system in \eqref{eq:noise} with $A(t)$ continuously differentiable and $BA(t) - D$ bounded, that is, there exists an $L>0$ such that $\|BA(t) - D\|\leq L \ \forall t$. Further suppose $\xi(t,\omega)$ are zero mean and i.i.d., and for all $i = 1,...,n$ there exists a $k_i>0$ such that   $\|g_i(t,p)\| \leq k_i |p_i|^2$ for all $t\geq 0$. % for some $c>0$,   $\|g(t,p)\| \leq c\|p\|^2$ for all $t\geq 0$. 
Assume that $\sup_{t\geq0} s_1(BA(t)-D)<0$, and $\gamma_1$ in Definition \ref{def:gam}, is well-defined and finite. If $\sup_{t\geq 0} \|B\dot{A}(t) - D\| < \frac{1}{2\gamma_1^2}$ or $\int_t^{t+T}\|B\dot{A}(s) - D\|ds \leq \mu T + \alpha$ for small enough $\mu>0$, 
then the origin is almost surely asymptotically stable.
\end{theorem}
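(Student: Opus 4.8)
The plan is to combine the time--varying quadratic Lyapunov function $V(p,t) = p^T Q(t) p$ from Lemma \ref{lem:lin} with Khasminskii's criterion in Lemma \ref{lem:khas}, in exactly the way that Theorem \ref{thm:noise2} combined the simple Lyapunov function $V(p)=\frac{1}{2}p^Tp$ with Lemma \ref{lem:khas} in the symmetric setting. Here $Q(t)$ is the solution \eqref{eq:q} of the point--wise Lyapunov equation \eqref{eq:r}, which exists because $\sup_{t\geq0}s_1(BA(t)-D)<0$ makes $BA(t)-D$ Hurwitz for every $t$. First I would verify the hypotheses that Lemma \ref{lem:khas} places on $V$: by the two--sided bound \eqref{eq:ray}, $\gamma_0\|p\|^2 \leq V(p,t)\leq \gamma_1\|p\|^2$, so $V$ is positive definite uniformly in $t$ and $V(0,t)=0$.

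The core of the argument is the drift inequality $\frac{d^0V}{dt}\leq -c_1 V$. Computing the deterministic drift gives $\frac{d^0V}{dt} = p^T\dot{Q}(t)p + 2p^T Q(t)F(t,p)$, with $F(t,p)=(BA(t)-P(t)BA(t)-D)p$. After reducing the nonlinear drift to the linear one, the computation is identical to \eqref{eq:vdot}--\eqref{eq:vdot2} in Lemma \ref{lem:lin}: the Lyapunov equation \eqref{eq:r} turns the linear part into $-\|p\|^2$, and the bound \eqref{eq:gammas} on $\dot{Q}(t)$ yields $\frac{d^0V}{dt}\leq -(1-2\gamma_1^2\|B\dot{A}(t)-D\|)\|p\|^2$ under the first hypothesis, or the comparison--principle estimate under the second. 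Using $\|p\|^2\geq V/\gamma_1$, this becomes $\frac{d^0V}{dt}\leq -c_1 V$ with $c_1>0$ whenever $\sup_{t\geq0}\|B\dot{A}(t)-D\|<\frac{1}{2\gamma_1^2}$, and analogously, for small enough $\mu$, in the integral case, exactly as at the end of the proof of Lemma \ref{lem:lin}.

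It then remains to check the noise conditions. From $\|g_i(t,p)\|\leq k_i|p_i|^2$, the Frobenius--norm chain \eqref{eq:g1}--\eqref{eq:gend} gives $\|g(t,p)\|\leq \sqrt{c}\,\|p\|^2$ with $c=n(\max_i k_i^2)$; combining with $\|p\|^2\leq V/\gamma_0$ yields $\|g\|\leq (\sqrt{c}/\gamma_0)V$, so the second part of \eqref{eq:Vbound} holds with $c_2=\sqrt{c}/\gamma_0$. Since $\xi(t,\omega)$ is i.i.d.\ it satisfies the strong law of large numbers, and being zero mean with finite $\sup_{t\geq0}E|\xi(t,\omega)|$ it meets \eqref{eq:exp} after choosing $b$ appropriately. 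Lemma \ref{lem:khas} then delivers almost sure asymptotic stability of the origin.

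I expect the main obstacle to be the reduction of the nonlinear drift to the linear one in the second paragraph. In the symmetric case of Theorem \ref{thm:noise2} the gradient of $V$ is simply $p$, so the nonlinear contribution $-p^TP(t)BA(t)p=-\sum_i p_i^2(BA(t)p)_i$ is manifestly non--positive (using $p\geq0$ from Lemma \ref{lem:posSto}) and is discarded. Here the gradient is $2Q(t)p$ and the nonlinear term is $-2\big(Q(t)p\big)^TP(t)BA(t)p$, whose sign is not obvious because $Q(t)$, though symmetric positive definite, need not be entry--wise non--negative, so the componentwise comparison $F(t,p)\leq(BA(t)-D)p$ does not pass through $Q(t)$. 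I would address this either by importing the trajectory comparison of Theorem \ref{thm:DFEasymp} (the nonlinear drift only decreases each $\dot{p}_i$, so the sample paths are dominated by those of the linear system to which Lemma \ref{lem:lin} applies) or by establishing directly that this cubic term cannot overturn the negative--definite quadratic part; pinning down this step rigorously is the delicate part of the proof.
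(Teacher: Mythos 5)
Your proposal follows the paper's proof route exactly: take the time--varying Lyapunov function $V(p,t)=p^TQ(t)p$ from Lemma \ref{lem:lin}, establish the drift bound $\frac{d^0V}{dt}\leq -c_1V$, verify $\|g\|\leq c_2V$ via \eqref{eq:g1}--\eqref{eq:gend} together with the equivalence \eqref{eq:ray}, note that i.i.d.\ zero-mean noise satisfies the strong law of large numbers and \eqref{eq:exp}, and invoke Lemma \ref{lem:khas}. The one step worth scrutinizing is exactly the one you flag as delicate, and you should know that the paper does not resolve it either: its entire treatment of the drift inequality is the sentence ``Similar to the proof of Theorems \ref{thm:DFEasymp} and \ref{thm:noise2}, appealing to Lemma \ref{lem:lin}, it can be shown $\frac{d^0V}{dt}\leq -c_1V$.'' Your observation is precisely why that sentence is not a proof: with $\nabla V=2Q(t)p$, the Lyapunov equation \eqref{eq:r} kills the linear part, but the nonlinear contribution $-2p^TQ(t)P(t)BA(t)p$ has no definite sign, because $Q(t)$ in \eqref{eq:q}, though symmetric positive definite, need not be entrywise nonnegative; hence the componentwise domination $F(t,p)\leq (BA(t)-D)p$ that powered Theorem \ref{thm:DFEasymp} (via comparison of trajectories) and Theorem \ref{thm:noise2} (via $\nabla V=p\geq 0$) does not survive multiplication by $Q(t)$.

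Of your two proposed repairs, the trajectory-comparison route is evidently what the paper intends by citing Theorem \ref{thm:DFEasymp}, but it does not mesh cleanly with Lemma \ref{lem:khas}: the hypotheses \eqref{eq:Vbound} concern a Lyapunov function for the system \eqref{eq:noise} itself, so one would instead have to apply the lemma to the dominating linear stochastic system and then prove a pathwise domination $p(t)\leq\bar p(t)$ for each noise realization. That comparison requires quasimonotonicity of the dominating vector field \emph{including} the noise term $g(t,p)\xi(t,\omega)$, which the hypothesis $\|g_i(t,p)\|\leq k_i|p_i|^2$ does not provide ($g_i$ may depend on all coordinates and $\xi$ changes sign). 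Your second route, bounding the cubic term directly, works only locally: $-2p^TQ(t)P(t)BA(t)p$ is $O(\|p\|^3)$ and is dominated by $-\|p\|^2$ only near the origin, and for the stochastic system no a priori bound $p_i\leq 1$ is available to make this global. In short: your proposal reproduces the paper's argument, and the obstacle you identify is a genuine gap --- but it is a gap in the paper's own proof as much as in yours; closing it honestly (e.g., by proving the pathwise comparison under an additional structural assumption on $g$, or by settling for local almost sure asymptotic stability) would be a strengthening of the paper, not a deviation from it.
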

\begin{proof}

%Consider the Lyapunov function candidate $V(p) = \frac{1}{2}p^T p$. 
Similar to the proof of Theorems \ref{thm:DFEasymp} and \ref{thm:noise2}, appealing to Lemma \ref{lem:lin}, it can be shown $\frac{d^0V}{dt} \leq -c_1 V$.

%Define $\sigma_r := \inf\{t \geq 0 : \|p(t)\|\geq r\}$ and have $\sigma_r \wedge t$ indicate the minimum of $\sigma_r$ and $t$. 

%%\begin{equation}
%\begin{array}{lcl}
%LV(p) &=& p^T \dot{p} + \dot{p}^T p + p^T ((I -P(t))BE[A(t)] - D)p + p^T p\\
%&=& p^T (2(I -P(t))BE[A(t)] + ((I -P(t))BE[A(t)])^T - 3D + \eta(t,p) + \eta(t,p)^T+ I)p \\
%&\leq & p^T(2BE[A(t)] + (BE[A(t)])^T - 3D + \eta(t,p) + \eta(t,p)^T + I)p \\
% &\leq & (3\sup_{t\geq 0} \lambda_1(BE[A(t)]-D) + 1 + \sup_{t\geq 0}\lambda_{1}(\eta(t,p) + \eta(t,p)^T))\|p\|^2  \\
% &\leq& -k\|p\|^2,
%\end{array}
%\end{equation}

%\noindent satisfying Equation \eqref{eq:lv}% (Generator from Example 3.6 in \cite{khasminskii2011stochastic} but this is not time dependent, dang it).
By our assumption $\|g_i(t,p)\| \leq k_i |p_i|^2$ and \eqref{eq:g1}-\eqref{eq:gend}, we have that $\|g(t,p)\| \leq  \sqrt{c}V$ for all $t\geq 0$, where $c = n(\max_i k_i^2)$.
Also, since $\xi(t,\omega)$ is i.i.d., it satisfies the strong law of large numbers and \eqref{eq:exp} is satisfied by the zero mean assumption. Therefore, by Lemma \ref{lem:khas}, the origin is almost surely asymptotically stable.
%Note that 3) follows trivial from either 1) or 2).
\end{proof}

\subsubsection{Ito's formula-based modeling}  %If we let the matrix $A(t)$ be a continuous random variable we get a similar result. We will momentarily consider the discrete time case for simplicity.
%\subsubsection{Continuous Case}%Stochastic Barbalat's Lemma} 
%Now, returning to the continuous time case with a slight modification, we shall see a similar result. 
Consider the system
\begin{equation}\label{eq:ito}
d{p}(t) = \underbrace{(BA(t) - P(t)BA(t) - D)p(t)dt}_{F(t,p)} + g(t,p)dw,
\end{equation}
which represents a perturbation to the model in \eqref{eq:tv}, where $w$ is a $d$-dimensional vector of independent standard Wiener processes and $A(t)$, $B$,  $D$, and $g(t,p)$ are deterministic. Again assume that $g(t,0) = 0$ for all $t$, and $p_i(0)\geq 0$ for all $i=1,...,n$.

Similar to Lemmas \ref{lem:pos} and \ref{lem:posSto}, we can state a positivity result for $p(t)$ in \eqref{eq:ito}:
\begin{lemma}\label{lem:posIto}
Consider the system in \eqref{eq:ito} with $p_i(0)\geq 0$ for all $i = 1,...,n$. If, for all $i = 1,...,n$ there exists a $k_i>0$ such that   $\|g_i(t,p)\| \leq k_i |p_i|$ for all $t\geq 0$, then $p_i(t)\geq 0$ for all $t\geq0$, $i = 1,...,n$.
\end{lemma}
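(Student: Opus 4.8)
The plan is to reproduce the boundary reasoning of Lemmas \ref{lem:pos} and \ref{lem:posSto}, but to replace the informal ``derivative at the boundary'' argument by an It\^o computation, since for a diffusion the behaviour of $p_i$ near the hyperplane $\{p_i=0\}$ is governed by the quadratic variation and cannot be read off from the drift alone. Two structural facts drive everything. First, by Lemma \ref{lem:pos} the drift $F(t,p)$ of \eqref{eq:ito} points into the non-negative orthant on its boundary: whenever $p_i=0$ and $p_j\ge 0$ for $j\ne i$, the $i$th drift component reduces to $F_i(t,p)=\beta_i\sum_j a_{ij}(t)p_j\ge 0$. Second, the hypothesis $\|g_i(t,p)\|\le k_i|p_i|$ forces the $i$th row of the diffusion matrix to vanish on $\{p_i=0\}$, so the noise driving coordinate $i$ switches off exactly as $p_i$ reaches zero. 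Heuristically the face $\{p_i=0\}$ is therefore non-attracting from above and cannot be crossed downward.

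To make this rigorous I would use a penalization functional. Define $V(p)=\sum_{i=1}^n (p_i^-)^2$ with $p_i^-=\max(-p_i,0)$, so that $V(p)=0$ if and only if $p\ge 0$ componentwise; since each $(x^-)^2$ is only piecewise $C^2$, I would first work with a $C^2$ mollification $\psi_\varepsilon$ of $(\cdot^-)^2$ and pass to the limit at the end. Applying It\^o's formula to $\psi_\varepsilon(p_i(t))$ and summing over $i$ yields three contributions: the drift term $\sum_i \psi_\varepsilon'(p_i)F_i(t,p)$, a martingale term $\sum_i\psi_\varepsilon'(p_i)g_i(t,p)\,dw$ whose expectation vanishes, and the It\^o correction $\tfrac12\sum_i\psi_\varepsilon''(p_i)\|g_i(t,p)\|^2\,dt$.

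The key estimate is the It\^o correction. In the limit $\psi''$ is supported on $\{p_i<0\}$, where $\|g_i(t,p)\|^2\le k_i^2 p_i^2=k_i^2(p_i^-)^2$, so this term is bounded by a constant multiple of $V(p)$; this is precisely where the \emph{linear} bound $\|g_i\|\le k_i|p_i|$ is used, and it is exactly strong enough. For the drift, on $\{p_i<0\}$ one has $\psi'(p_i)=-2p_i^-$, and splitting $F_i$ shows that the contributions from $-\delta_i p_i$ and from $-p_i^-(1-p_i)\beta_i\sum_j a_{ij}p_j^+$ are non-positive, while the remaining cross terms $p_i^-\sum_j a_{ij}p_j^-$ are controlled by a stopping-time localization $\tau_N$ that bounds $p$ (this localization also tames the merely locally Lipschitz quadratic term $P(t)BA(t)p$). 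Hence $\frac{d}{dt}E[V(p(t\wedge\tau_N))]\le C_N\,E[V(p(t\wedge\tau_N))]$, and Gr\"onwall's inequality with $V(p(0))=0$ gives $E[V(p(t\wedge\tau_N))]=0$; letting $N\to\infty$ and $\varepsilon\to 0$ yields $V(p(t))=0$, i.e.\ $p(t)\ge 0$ almost surely.

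The hard part will be the It\^o correction term: unlike the additive-noise case of Lemma \ref{lem:posSto}, where the noise enters only through $g_i\xi$ and vanishes pointwise as $p_i\to 0$, here the quadratic variation produces a genuinely second-order term $\psi_\varepsilon''\|g_i\|^2$ that a naive boundary argument would miss, and controlling it is exactly what the hypothesis $\|g_i\|\le k_i|p_i|$ is designed to permit. The secondary technical points are the non-smoothness of $(\cdot^-)^2$ and the local Lipschitz nature of the drift, both of which I would dispatch by mollification and stopping-time localization rather than by any new idea.
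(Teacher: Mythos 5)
Your proposal is correct, but it takes a genuinely different---and considerably more rigorous---route than the paper. The paper's own proof is a verbatim transplant of the deterministic boundary argument of Lemmas \ref{lem:pos} and \ref{lem:posSto}: it bounds $\lim_{p_i\to 0}\|g_i(t,p)\,dw\|\le\lim_{p_i\to 0}k_i|p_i|\,\|dw\|=0$, concludes that ``the random part of the derivative vanishes'' at the boundary, and then argues that $p_i(t)=0$ implies $\dot p_i(t)\ge 0$. This treats $dw$ as a pointwise bounded object and the sample paths as differentiable, which is precisely the heuristic your opening paragraph identifies as inadequate: near the face $\{p_i=0\}$ the behaviour of a diffusion is governed by its quadratic variation, which the paper's argument never sees. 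Your penalization scheme (mollify $V(p)=\sum_i(p_i^-)^2$, apply It\^o's formula, kill the martingale term in expectation after localization, dominate the It\^o correction by $\sum_i k_i^2(p_i^-)^2\le(\max_i k_i^2)V(p)$, check that the drift contributions on $\{p_i<0\}$ are either non-positive or bounded by $C_N V$, and close with Gr\"onwall from $V(p(0))=0$) is the standard rigorous argument for invariance of the non-negative orthant, and it correctly pinpoints the role of the hypothesis: the \emph{linear} bound $\|g_i(t,p)\|\le k_i|p_i|$ (as opposed to the quadratic one in Lemma \ref{lem:posSto}) is exactly what makes the second-order term $\tfrac{1}{2}\psi_\varepsilon''\|g_i\|^2$ comparable to $V$ itself---a term the paper's pointwise reasoning misses entirely. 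What the paper's version buys is brevity and formal parallelism with the two earlier positivity lemmas; what yours buys is an actual proof in the SDE setting, at the cost of standard technicalities: the localization needs $a_{ij}(t)$ locally bounded (assumed throughout the paper), the cubic cross term $p_i^-(1-p_i)\sum_j a_{ij}p_j^-$ is where that localization genuinely enters, and It\^o's formula for the $C^1$ function $(x^-)^2$ can alternatively be justified via It\^o--Tanaka rather than mollification. Note finally that your conclusion is non-negativity \emph{almost surely}, which is the honest reading of the lemma's claim; the paper states it as though it were a sure, pathwise statement.
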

\begin{proof}
%First we will show that if $p(0)\geq 0$ then $p(t)\geq 0$ for all $t\geq 0$. 
By Lemma \ref{lem:pos} the deterministic part of \eqref{eq:ito}, $F(t,p)$, is non-negative for all $t\geq 0$. Therefore we turn our attention to the $g_i(t,p)\xi(t,\omega)$ term, where $i$ refers to the $i$th row of $g(t,p)$. By our assumption that $\|g_i(t,p)\| \leq k_i |p_i|$ we have%; so as $p(t)$ approaches zero so will $g(t,p)$. Therefore
, for any $t\geq 0$,
\begin{align*}
    \lim_{p_i \rightarrow 0} \|g_i(t,p)dw\| 
    &\leq \lim_{p \rightarrow 0}\|g_i(t,p)\|  \|dw\| \\
    & \leq \lim_{p_i \rightarrow 0} k_i |p_i| \|dw\| \\
    & = 0.
\end{align*}
Therefore if $p_i(0)\geq 0$ then as $p_i$ approaches zero, the random part of the derivative vanishes. Further if $p_i(t) = 0$, then $\dot{p}_i(t)\geq 0$, and consequently $p_i(t)\geq 0$ for all $t\geq 0$. 
\end{proof}

We now call several results from \cite{khasminskii2011stochastic}. Consider the system 
\begin{equation}\label{eq:gen}
dp = b(t,p)dt + \sum_{r=1}^k \sigma_r(t,p) dw_r(t),
\end{equation}
where $w_i$'s are independent standard Wiener processes and $p(t)$, $b(t,p)$, and  $\sigma_r(t,p)$ are vectors in $\mathbb{R}^d$. The {\em generator operator} (see Chapter 5 in \cite{khasminskii2011stochastic}), which generalizes the operation of differentiating a Lyapunov function $V$, is given by
\begin{equation}
\mathcal{L} =  \frac{\partial}{\partial t} +\langle b,\frac{\partial}{\partial p}\rangle +\sum_{r} \langle \sigma_r,\frac{\partial}{\partial p} \rangle^2_,
\end{equation}
where $<\cdot,\cdot>$ is the inner product and $\frac{\partial}{\partial p} = \left[ \frac{\partial}{\partial p_1}, \dots, \frac{\partial}{\partial p_n}\right]^T_.$% is a vector of partial derivatives. 
\begin{definition} \em{(Section 5.7 %pg. 171 
in \cite{khasminskii2011stochastic})}
A system is {\em exponentially 2-stable} if for some constants $a,b$, and $\forall t\geq 0$,%s$,
\begin{equation*}
E\|p(t)\|^2 \leq a\|p(0)\|^2e^{bt}.%(t-s)}.
\end{equation*}
\end{definition}
\begin{theorem}\em{(Theorem 5.11,15, Section 5.7 %pg. 171,175
in \cite{khasminskii2011stochastic})} \label{thm:exp2}
Given a system as in Equation \eqref{eq:gen}, if there exists a, twice continuously differentiable with respect to $p$ and continuously differentiable with respect to $t$, Lyapunov function $V(t,p)$ such that
\begin{equation}\label{eq:vb}
k_1 \|p\|^2\leq V(t,p) \leq k_2 \|p\|^2,
\end{equation}
\begin{equation}\label{eq:lv}
\mathcal{L}V(t,p) \leq -k_3 \|p\|^2,
\end{equation}
for some positive constants $k_1, k_2, k_3$, then the origin is exponentially 2-stable. Furthermore, the origin is almost surely exponentially stable.
\end{theorem}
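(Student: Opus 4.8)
The plan is to establish exponential $2$-stability by a standard stochastic Lyapunov computation based on It\^o's and Dynkin's formulas, and then to upgrade the mean-square decay to almost sure exponential decay using martingale inequalities together with a Borel--Cantelli argument; the hypotheses \eqref{eq:vb} and \eqref{eq:lv} play the roles that the quadratic sandwich and the negative-definite derivative play in deterministic Lyapunov theory. First I would apply It\^o's formula to $V(t,p(t))$ along solutions of \eqref{eq:gen}. Since $V$ is twice continuously differentiable in $p$ and once in $t$, this yields
\[ dV(t,p(t)) = \mathcal{L}V(t,p(t))\,dt + \sum_{r=1}^k \left\langle \frac{\partial V}{\partial p},\, \sigma_r(t,p(t)) \right\rangle dw_r(t), \]
where the drift is exactly the generator $\mathcal{L}V$ by its definition. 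Taking expectations annihilates the It\^o integral, so $\frac{d}{dt}E[V(t,p(t))] = E[\mathcal{L}V(t,p(t))]$. Invoking \eqref{eq:lv}, namely $\mathcal{L}V \leq -k_3\|p\|^2$, and then using the upper bound $V \leq k_2\|p\|^2$ from \eqref{eq:vb} (which gives $\|p\|^2 \geq V/k_2$), I obtain the scalar differential inequality
\[ \frac{d}{dt}E[V(t,p(t))] \leq -\frac{k_3}{k_2}\,E[V(t,p(t))]. \]

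Applying the Comparison Principle (\cite{khalil1996nonlinear}) or Gr\"onwall's inequality (\cite{gronwall1919note}) to this inequality gives $E[V(t,p(t))] \leq V(0,p(0))\,e^{-(k_3/k_2)t}$. Sandwiching once more with the lower bound $k_1\|p\|^2 \leq V$ and the initial upper bound $V(0,p(0)) \leq k_2\|p(0)\|^2$ converts this into the moment estimate
\[ E\|p(t)\|^2 \leq \frac{k_2}{k_1}\,\|p(0)\|^2\, e^{-(k_3/k_2)t}, \]
which is exactly exponential $2$-stability with $a = k_2/k_1$ and $b = -k_3/k_2 < 0$.

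For almost sure exponential stability I would localize to unit time intervals $[m,m+1]$. Using It\^o's formula for $\|p(t)\|^2$ on such an interval, taking the supremum over $t\in[m,m+1]$, and bounding the resulting martingale term by Doob's maximal inequality and the Burkholder--Davis--Gundy inequality (with a quadratic-growth control on the diffusion coefficients $\sigma_r$ keeping all terms integrable), one bounds $E\big[\sup_{m\leq t\leq m+1}\|p(t)\|^2\big]$ by a fixed multiple of $E\|p(m)\|^2$, and hence by a constant times $e^{-(k_3/k_2)m}$ from the previous step. A Markov-inequality-plus-Borel--Cantelli argument then shows that, almost surely, $\sup_{m\leq t\leq m+1}\|p(t)\|^2$ decays at an exponential rate arbitrarily close to $k_3/k_2$ for all large $m$, so that $\limsup_{t\to\infty}\tfrac{1}{t}\log\|p(t)\| < 0$ with probability one.

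The main obstacle is this last step. The mean-square bound is a routine Dynkin-plus-Gr\"onwall computation, but passing from it to pathwise exponential decay is where the real work lies: it requires the martingale machinery (Doob and Burkholder--Davis--Gundy) to control the supremum of $\|p(t)\|$ over each interval, followed by a summability estimate to invoke Borel--Cantelli. This is precisely the content supplied by \cite{khasminskii2011stochastic}.
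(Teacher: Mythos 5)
The paper offers no proof of this statement at all: it is imported verbatim from Khasminskii's monograph (the ``Theorem 5.11,15, Section 5.7'' citation of \cite{khasminskii2011stochastic}) and used as a black box in the proof of Theorem \ref{thm:ito2}, so the only meaningful comparison is with the source's argument. Your sketch is correct in substance, and its first half coincides with the standard treatment: It\^o/Dynkin plus \eqref{eq:lv}, the bound $\|p\|^2 \geq V/k_2$, and Gr\"onwall give $E[V(t,p(t))] \leq V(0,p(0))e^{-(k_3/k_2)t}$, and the sandwich \eqref{eq:vb} converts this into $E\|p(t)\|^2 \leq (k_2/k_1)\|p(0)\|^2 e^{-(k_3/k_2)t}$; the one technicality you should flag is that annihilating the It\^o integral under the expectation requires a localization (stopping-time) argument before Gr\"onwall is applied. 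Where you genuinely diverge from the source is the almost sure part. Your Doob/Burkholder--Davis--Gundy plus Borel--Cantelli argument over unit intervals is the route taken in other textbooks (e.g., Mao's), and it works, but the maximal estimate $E\bigl[\sup_{m\leq t\leq m+1}\|p(t)\|^2\bigr] \lesssim E\|p(m)\|^2$ requires linear-growth bounds on $b$ and the $\sigma_r$ that are not hypotheses of the theorem as stated (they are standing assumptions in Khasminskii's framework). Khasminskii's own argument is shorter and avoids this: since \eqref{eq:lv} and \eqref{eq:vb} give $\mathcal{L}V \leq -(k_3/k_2)V$, the process $e^{(k_3/k_2)t}V(t,p(t))$ is a nonnegative (local, hence true) supermartingale, so the supermartingale maximal inequality yields $\sup_{t\geq 0} e^{(k_3/k_2)t}V(t,p(t)) < \infty$ almost surely, and \eqref{eq:vb} then gives the pathwise bound $\|p(t)\|^2 \leq C(\omega)k_1^{-1}e^{-(k_3/k_2)t}$, i.e., $\limsup_{t\to\infty}\frac{1}{t}\log\|p(t)\| \leq -k_3/(2k_2) < 0$ with probability one. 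What your route buys is robustness in settings where $e^{\gamma t}V$ fails to be a supermartingale; what the supermartingale route buys is brevity and strictly weaker hypotheses, which is why it is the one the cited theorem actually rests on.
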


\begin{theorem} \label{thm:ito2}
Consider the system in \eqref{eq:ito} with $g(t,p)$ bounded and locally Lipschitz in p(t) uniformly in $t$, $w$ are independent standard Wiener processes, and for all $i = 1,...,n$ there exists a $k_i>0$ such that   $\|g_i(t,p)\| \leq k_i |p_i|$ for all $t\geq 0$. If 
%\begin{enumerate}
    %\item 
    $\beta_i=\beta$ $\forall i$, %the maximum eigenvalue of $BA(t)-D$ is always less than zero, that is 
$A(t)$ is symmetric, piecewise continuous in $t$, and bounded, and $\sup_{t\geq0} \lambda_1(BA(t)-D)<-c$, with $c := \sum_{i=1}^n k_i^2+\epsilon$, $\epsilon>0$, %:= \max\{0,\frac{1}{2}\sum_{i=1}^n k_i^2+\epsilon \}$,
%    \item $\sup_{t\geq0} s_1(BA(t)-D)<0$ and  $\sup_{t\geq 0} \|B\dot{A}(t) - D\| < \frac{1}{2\gamma_1^2}$,  $\gamma_1$ defined in Definition \ref{def:gam} assuming $\gamma_1$ is well-defined and finite, or 
%    \item $\int_t^{t+T}\|B\dot{A}(s) - D\|ds \leq \mu T + \alpha$ for small enough $\mu>0$,
%\end{enumerate}
then the origin is exponentially 2-stable and almost surely exponentially stable.
\end{theorem}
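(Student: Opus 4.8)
The plan is to verify the hypotheses of Theorem \ref{thm:exp2} for the quadratic Lyapunov candidate $V(p) = \frac{1}{2}p^Tp$, exactly the function used in Theorems \ref{thm:atorigin} and \ref{thm:noise}. This $V$ is smooth, hence twice continuously differentiable in $p$ and trivially continuously differentiable in $t$, and it satisfies the two-sided bound \eqref{eq:vb} with $k_1 = k_2 = \frac{1}{2}$ since $V(p) = \frac{1}{2}\|p\|^2$. The boundedness and local Lipschitz continuity of $g$, together with the boundedness of $A(t)$, make the SDE \eqref{eq:ito} well posed, so that Theorem \ref{thm:exp2} is applicable. The entire content of the argument therefore reduces to establishing the generator inequality \eqref{eq:lv}, namely $\mathcal{L}V(t,p) \leq -k_3\|p\|^2$ for some $k_3 > 0$.

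First I would compute $\mathcal{L}V$ by splitting it into its drift (first-order) and diffusion (second-order) contributions. For the drift term $\langle F(t,p), \partial V/\partial p\rangle = p^T(BA(t) - P(t)BA(t) - D)p$, I would reuse verbatim the estimate from Theorem \ref{thm:atorigin}: by Lemma \ref{lem:posIto} the state remains in the nonnegative orthant, so $p \geq 0$ and, since $\beta \geq 0$ and $a_{ij} \geq 0$, the entries of $P(t)BA(t)$ are nonnegative and $p^TP(t)BA(t)p \geq 0$. Dropping this term and invoking Rayleigh--Ritz for the symmetric matrix $BA(t)-D$ (symmetric because $A(t)$ is symmetric and $\beta_i=\beta$) yields $p^TF(t,p) \leq \lambda_1(BA(t)-D)\|p\|^2 \leq \big(\sup_{t\geq0}\lambda_1(BA(t)-D)\big)\|p\|^2 < -c\,\|p\|^2$.

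For the diffusion term, evaluating the second-order part of the generator $\mathcal{L}$ at $V = \frac{1}{2}p^Tp$ reduces it to $\sum_{i=1}^n \|g_i(t,p)\|^2$ (the sum of squared row norms of $g$), because the Hessian of $V$ is the identity. The hypothesis $\|g_i(t,p)\| \leq k_i|p_i|$ (the \emph{linear} bound, which is also exactly what Lemma \ref{lem:posIto} requires, in contrast with the quadratic bound of Lemma \ref{lem:posSto}) then gives $\sum_{i=1}^n \|g_i(t,p)\|^2 \leq \sum_{i=1}^n k_i^2|p_i|^2 \leq \big(\sum_{i=1}^n k_i^2\big)\|p\|^2$. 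Adding the two contributions and using $c = \sum_{i=1}^n k_i^2 + \epsilon$ produces $\mathcal{L}V(t,p) \leq \big(-c + \sum_{i=1}^n k_i^2\big)\|p\|^2 = -\epsilon\|p\|^2$, so \eqref{eq:lv} holds with $k_3 = \epsilon > 0$. Theorem \ref{thm:exp2} then delivers both exponential $2$-stability and almost sure exponential stability, completing the argument.

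The main obstacle, and the reason the spectral threshold is strengthened from $\sup_{t\geq0}\lambda_1(BA(t)-D)<0$ to $<-c$, is precisely the Ito correction term: unlike the additive-noise model of Theorem \ref{thm:noise}, where the zero-mean assumption annihilates the noise contribution to $E[\dot V]$ in \eqref{eq:thm34}, the second-order term in the Ito generator does not vanish and must be absorbed by the drift. Getting the bookkeeping right --- identifying the diffusion contribution as $\sum_i\|g_i(t,p)\|^2$, bounding it by $\big(\sum_i k_i^2\big)\|p\|^2$ via the per-row linear growth of $g$, and checking that the strengthened margin $\epsilon$ is exactly what survives --- is the only delicate point; the quadratic-form and positivity estimates are inherited directly from the earlier results.
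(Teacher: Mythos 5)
Your proposal is correct and follows essentially the same route as the paper: the same Lyapunov function $V(p)=\tfrac{1}{2}p^Tp$ with \eqref{eq:vb} holding for $k_1=k_2=\tfrac{1}{2}$, the same drift estimate via Lemma \ref{lem:posIto} (nonnegativity of $p^TP(t)BA(t)p$) and Rayleigh--Ritz on the symmetric matrix $BA(t)-D$, the same diffusion bound from $\|g_i(t,p)\|\leq k_i|p_i|$, and the same appeal to Theorem \ref{thm:exp2}. The only discrepancy is that you write the second-order (It\^{o}-correction) term as $\sum_i \|g_i(t,p)\|^2$ where the paper's proof carries the factor $\tfrac{1}{2}$ (i.e.\ $\tfrac{1}{2}\sum_i g_i g_i^T$), but since you are merely over-estimating a nonnegative term this is harmless: your chain still closes at $\mathcal{L}V\leq -\epsilon\|p\|^2$, while the paper's sharper accounting gives $\mathcal{L}V\leq -\left(\epsilon+\tfrac{1}{2}\sum_i k_i^2\right)\|p\|^2$.
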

\begin{proof}
%{\em Proof.}
Consider the Lyapunov function candidate $V(p) = \frac{1}{2}p^T p$. Clearly Equation \eqref{eq:vb} is satisfied. 
Since %$V(p)$ is not dependent on time, $\frac{\partial V}{\partial t} = 0$. Also, 
$\frac{\partial V}{\partial p} = \nabla V = p$ and  $\nabla^2 V =I$, we have
\begin{align}
\mathcal{L}V(p) &= \langle b(t,p),\frac{\partial V}{\partial p} \rangle + \langle g(t,p),\frac{\partial V}{\partial p} \rangle^2 \nonumber \\
&= p^Tb(t,p) +\frac{1}{2}\sum_{i,j} (g(t,p)g^T(t,p))_{ij}\frac{\partial V}{\partial p_i\partial p_j} \nonumber \\
&= p^T(BA(t) - PBA(t) - D)p +\frac{1}{2}\sum_{i=1}^n g_i(t,p)g_i^T(t,p) \nonumber \\
&\leq p^T(BA(t) - D)p +\frac{1}{2}\sum_{i=1}^n (k_i |p_i|)^2 \label{eq:lvp}\\
 &\leq \left(\sup_{t\geq 0} \lambda_1(BA(t)-D) + \frac{1}{2}\sum_{i=1}^n k_i^2\right)\|p\|^2 \label{eq:lvp2}\\
 &< \left(-c+ \frac{1}{2}\sum_{i=1}^n k_i^2\right)\|p\|^2 \nonumber \\
 &= -\epsilon \|p\|^2,\label{eq:epsp}
\end{align}
where \eqref{eq:lvp} holds because $PBA(t) \geq 0$, by construction and Lemma \ref{lem:posIto} and  $\|g_i(t,p)\| \leq k_i |p_i|$, for all $i$, by assumption; \eqref{eq:lvp2} holds by the symmetry of $BA(t)$; and \eqref{eq:epsp} holds by definition of $c$. Thus \eqref{eq:lv} is satisfied % (Generator from Example 3.6 in \cite{khasminskii2011stochastic} but this is not time dependent, dang it)
and therefore by Theorem \ref{thm:exp2}, the origin is exponentially 2-stable and almost surely exponentially stable.
\end{proof}

\section{Simulations and Extensions}\label{sec:sim}

In this section we first compare via simulations the models in \eqref{eq:2n} and \eqref{eq:p} over different graph structures, and then compare them for the complete, fully connected graph with \eqref{eq:sis}. We also provide a comparison of the time--varying graph structure extensions of the models in \eqref{eq:tv} and \eqref{eq:2nt}. %, and discuss its equilibria and associated stability properties.
A variety of time--varying simulations are presented, leading to several corollaries and remarks.
Since the infection rate $p(t)$ and the location of the states, $z(t)$, are both time dependent, the simulations are best viewed in video format with links provided in the captions of the tables and figures. 

\subsection{Comparison: $n$-intertwined and $2^n$ Markov Chain Models}

While an initial analysis has been completed in \cite{van2009virus}, to evaluate the accuracy of the mean field approximation used in the  derivation of the $n$-intertwined model in \eqref{eq:p}, %compared to the $2^n$ model in \eqref{eq:2n}, 
we  further the analysis here. We simulate various  graph structures for both static and dynamic cases, for both the $n$-intertwined model  and the $2^n$ model.

\begin{figure}%[!ht]
    \centering
      \includegraphics[width=\columnwidth]{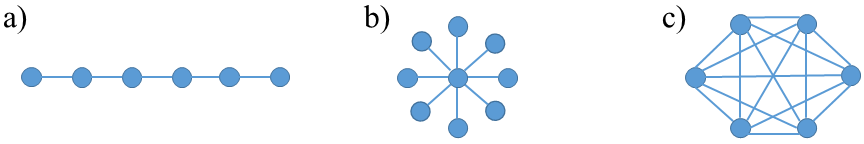}
      \caption{Graphs: a) Line b) Star c) Complete.}
\label{fig:graphs}
\end{figure} 
\begin{table}[]
\begin{center}
    \begin{tabular}{| l | l | l | l || l | l | l || l | l | l |}
    \hline
    $\frac{\beta }{\delta}$ & $\frac{1}{10}$ & $1$ & $10$ & $\frac{1}{10}$ & $1$ & $10$ & $\frac{1}{10}$ & $1$ & $10$ \\ \hline
    $n$ & \multicolumn{3}{ l|| }{ \ \ \ \ \ \  $p^1(0)$}  & \multicolumn{3}{ l|| }{ \ \ \ \ \ \ $p^2(0)$}& \multicolumn{3}{ l| }{ \ \ \ \ \ \ $p^3(0)$} \\ \hline
    6 & 0  &  1.02  &  0.64 & 0  &  1.02  &  0.45 & 0  &  1.02  &  0.30 \\ \hline
    8 & 0  &  1.24  &  0.31 & 0  &  1.24  &  0.04 & 0  &  1.24  &  0.30 \\ \hline
    10 & 0  &  1.43  &  0.33 & 0  &  1.43  &  0.02 & 0  &  1.43  &  0.33 \\ \hline
    13 & 0  &  1.67  &  0.37 & 0  &  1.67  &  0.02 & 0  &  1.67  &  0.37 \\ 
    \hline
    \end{tabular}
    \caption{$\|v(T)-p(T)\|$ for the line graph, $T=10000$.  For a simulation of $n=6$, $\frac{\beta }{\delta}=1$, and $p(0)=p^3(0) = [ 1 \ 0 \ \cdots \ 0]^T$ see \href{https://youtu.be/E49OTI4Pgh0}{youtu.be/E49OTI4Pgh0}.}
    \label{tab:line}
\end{center}
\end{table}

%\begin{table}[]
%\begin{center}
%    \begin{tabular}{| l | l | l | l || l | l | l |}
%    \hline
%    $\frac{\beta }{\delta}$ & $\frac{1}{10}$ & $1$ & $10$ & $\frac{1}{10}$ & $1$ & $10$ \\ \hline
%    $n$ & \multicolumn{3}{ l|| }{  $p(0) = [ 1 \ ...\ 1]^T$}  & \multicolumn{3}{ l| }{  $p(0) = [ 1 \ 0 \ ...\ 0]^T$} \\ \hline
%    6 & 0  &  1.02  &  0.06 & 0  &  1.02  &  0.30 \\ \hline
%    8 & 0  &  1.24  &  0.02 & 0  &  1.24  &  0.30 \\ \hline
%    10 & 0  &  1.43  &  0.02 & 0  &  1.43  &  0.33 \\ \hline
%    13 & 0  &  1.67  &  0.02 & 0  &  1.67  &  0.37 \\ 
%    \hline
%    \end{tabular}
%    \caption{$\|v(T)-p(T)\|$ for the line graph, $T=1000$. }
%    \label{tab:line}
%\end{center}
%\end{table}

\begin{table}[]
\begin{center}
    \begin{tabular}{| l | l | l | l || l | l | l || l | l | l |}
    \hline
    $\frac{\beta }{\delta}$ & $\frac{1}{10}$ & $1$ & $10$ & $\frac{1}{10}$ & $1$ & $10$ & $\frac{1}{10}$ & $1$ & $10$ \\ \hline
    $n$ & \multicolumn{3}{ l|| }{ \ \ \ \ \  \ $p^1(0)$}  & \multicolumn{3}{ l|| }{ \ \ \ \ \ \ $p^2(0)$}& \multicolumn{3}{ l| }{ \ \  \ \ \ \ $p^3(0)$} \\ \hline
    6 & 0  &  1.12  & 0.35 & 0  &  1.12  & 0.35 &  0  &  1.12  &  0.39 \\ \hline
    8 & 0  &  1.36  & 0.01 & 0  &  1.36  & 0.02 &  0  &  1.36  &  0.06 \\ \hline
    10 & 0  &  1.55  & 0.00 & 0  &  1.55 & 0.00 &  0  &  1.55  &  0.04 \\ \hline
    13 & 0  &  1.80  &  0  & 0  &  1.80  &  0 &  0  &  1.80  &  0.03   \\
    \hline
    \end{tabular}
    \caption{$\|v(T)-p(T)\|$ for the star graph, $T=10000$. For a simulation of $n=6$, $\frac{\beta }{\delta}=1$, and $p(0)=p^3(0) = [ 1 \ 0 \ \cdots\ 0]^T$ see \href{https://youtu.be/XOdNUDFngO4}{youtu.be/XOdNUDFngO4}.}
    \label{tab:star}
\end{center}
\end{table}
\subsubsection{Static Graphs} The static graph structures considered in the simulations are line graphs, star (hub--spoke) graphs, and complete graphs; see Figure \ref{fig:graphs} for examples of each graph structure. All the $A$ matrices for these graphs are symmetric and binary-valued. In the star graph, the central node is the first agent. %The tree graphs are constructed by successively connecting the $i$th agent to approximately a third (plus or minus a half) of the remaining unconnected agents until there are no agents left. The PPM graph is constructed by splitting the agents into three clusters and connecting each agent to agents in and out of its cluster with probabilities $p$ and $q$, respectively.
Each simulation was run for 10,000 time steps  (final time $T=10000$), with three initial conditions: 1) every agent infected, $p^1(0) = [ 1 \ \cdots\ 1]^T$, 2) half the agents infected, $p^2(0) = [ 1 \ \cdots\ 1 \ 0 \ \cdots\ 0]^T$, and 3) one agent infected, $p^3(0) = [ 1 \ 0 \ \cdots\ 0]^T$. We  explore the homogeneous virus case in these tests. The $(\beta,\delta)$ pairs were $[ (.1,1), (.5,.5), (1, .1)]$, and the number of agents, $n = 6,8,10, 13$. We limited simulations to these $n$ values since mean field approximations are typically worse for small values of $n$ and there is a computational limitation due to the size of the $2^n$ model.

The results are given in Tables \ref{tab:line}-\ref{tab:full} in terms of the 2-norm of the difference between the state of the $n$-intertwined Markov chain model at the final time ($p(T)$), and the mean of the $2^n$ Markov model at the final time ($v(T)$ as defined by \eqref{eq:v}). %In the interest of space, w
We round the error to zero if it is less than $0.001$. Since  the $n$-intertwined Markov chain model is an upper bounding approximation, the results show that the two models converge to the DFE for $\beta/\delta = 1/10$, resulting in small errors. However for $\beta/\delta = 1$, the models differ quite drastically; the $n$-intertwined Markov chain model appears to be at a NDFE while the $2^n$ model appears, in most cases, to be at or close to the DFE, resulting in large errors. For $\beta/\delta = 10$, the $n$-intertwined Markov chain model again performs quite well since both models are at a NDFE. However, not as well as for $\beta/\delta = 1/10$, since the models are at different NDFEs. Note also, for most of the $\beta/\delta = 10$ cases and for $\beta/\delta = 1$ for the complete graph, the errors decrease as $n$ increases.

\begin{table}[]
\begin{center}
    \begin{tabular}{| l | l | l | l | l | l | l | l | l | l |}
    \hline
    $\frac{\beta }{\delta}$ & $\frac{1}{10}$ & $1$ & $10$ & $\frac{1}{10}$ & $1$ & $10$ & $\frac{1}{10}$ & $1$ & $10$ \\ \hline
    $n$ & \multicolumn{3}{ l| }{ \ \ \ \ \  \ $p^1(0)$}  & \multicolumn{3}{ l| }{ \ \ \ \ \ \ $p^2(0)$}& \multicolumn{3}{ l| }{ \ \  \ \ \ \ $p^3(0)$} \\ \hline
    6  & 0    & 1.96  & 0.0 &  0   & 1.96  & 0.0 &  0   &  1.96  &  0.05 \\ \hline
    8  & 0    & 2.14  &  0   &  0   & 2.14  &  0   &  0   &  2.18  &  0.04 \\ \hline
    10 & 0    & 0.12  &  0   &  0   & 0.12  &  0   &  0   &  0.42  &  0.03 \\ \hline
    13 & 0.6  & 0.00  & 0.0 & 0.6  & 0.00  &  0   & 0.6  &  0.28  &  0.03 \\
    \hline
    \end{tabular}
    \caption{$\|v(T)-p(T)\|$ for the complete graph, $T=10000$. For a simulation of $n=6$, $\frac{\beta }{\delta}=1$, and $p^3(0)$ see \href{https://youtu.be/VTFZDdXsC6M}{youtu.be/VTFZDdXsC6M}.}
    \label{tab:full}
\end{center}
\end{table}

%\begin{table}[]
%\begin{center}
%    \begin{tabular}{| l | l | l | l || l | l | l |}
%    \hline
%    $\frac{\beta }{\delta}$ & $\frac{1}{10}$ & $1$ & $10$ & $\frac{1}{10}$ & $1$ & $10$ \\ \hline
%    $n$ & \multicolumn{3}{ l|| }{  $p(0) = [ 1 \ \cdots\ 1]^T$}  & \multicolumn{3}{ l| }{  $p(0) = [ 1 \ 0 \ \cdots\ 0]^T$} \\ \hline
%    6 & 0  &  1.95  &      1.9e-3  &  0  &  1.95  &  0.05 \\ \hline
%    8 & 0  &  0.47  &      1.4e-3   &  0  &  0.76  &  0.04 \\ \hline
%    10 & 0  &  0.02  &      1.2e-3   &  0  &  0.33  &  0.03 \\ \hline
%    13 & 0.60  &  0.00  &  0  &   0.60  &  0.28  &  0.03 \\
%    \hline
%    \end{tabular}
%    \caption{$\|v(T)-p(T)\|$ for the complete graph, $T=1000$. }
%    \label{tab:full}
%\end{center}
%\end{table}

\subsubsection{Comparison of the Complete Graph Models}

For completeness we include a comparison of the results from the static complete graph in Table \ref{tab:full} to the results of simulating the original model in \eqref{eq:sis}. Since \eqref{eq:sis} models the population as two groups, we sum the results of the mean field ($\displaystyle\sum_{i=1}^{n} p_i(T)$) and the means of the $2^n$ model ($\displaystyle \sum_{i=1}^{n} v_i(T)$, with $v(t)$ defined in \eqref{eq:v}. These sums, with the results of \eqref{eq:sis} ($I(T)$), are compared in Tables \ref{tab:kermack1}-\ref{tab:kermack2}. All three models perform very similarly except for the $2^n$ model for $(n, \frac{\beta }{ \delta}) = \{(13,1/10), (6,1), (8,1)\}$, which is consistent with Table \ref{tab:full}. 
\begin{table}[]
\begin{center}
    \begin{tabular}{| l | l | l | l |}
    \hline
    $n$, $\frac{\beta }{ \delta}$ & $1/10$ & $1$ & $10$ \\ \hline
    6  & 0, 0, 0  &   5.0, 4.8, 0  &   5.90, 5.88, 5.88 \\ \hline
    8  & 0, 0, 0  &   7.0, 6.86, .82  &   7.90, 7.89, 7.89 \\  \hline
    10 & 0, 0, 0  &   9.00, 8.89, 8.52  &   9.90, 9.89, 9.89 \\ \hline
    13 & 3.0, 2.17, 0  &  12.0, 11.9, 11.9  &  12.90, 12.89, 12.89  \\
    \hline
    \end{tabular}
    \caption{Comparison of complete graph models to \eqref{eq:sis}: Each cell is $I(T), \sum_{i=1}^{n} p_i(T), \sum_{i=1}^{2^n} v_i(T)$ with $T=10000$ and $I(0) = n$.}
    \label{tab:kermack1}
\end{center}
\end{table}

\begin{table}[]
\begin{center}
    \begin{tabular}{| l | l | l | l |}
    \hline
    $n$, $\frac{\beta }{ \delta}$ & $1/10$ & $1$ & $10$ \\ \hline
    6  &  0, 0, 0  &   5.0, 4.8, 0 &   5.90, 5.88, 5.88 \\ \hline
    8  &  0, 0, 0  &   7.0, 6.9, .82  &   7.90, 7.89, 7.89 \\  \hline
    10 &  0, 0, 0  &   9.00, 8.89, 8.52  &   9.90, 9.89, 9.89 \\ \hline
    13 &  3.0, 2.17,0  &  12.0, 11.9, 11.9  &  12.90, 12.89, 12.89 \\
    \hline
    \end{tabular}
    \caption{Comparison of complete graph models to \eqref{eq:sis}: Each cell is $I(T), \sum_{i=1}^{n} p_i(T), \sum_{i=1}^{2^n} v_i(T)$, with $T=10000$ and $I(0) = floor(n/2)$.}
    \label{tab:kermack1.5}
\end{center}
\end{table}

\begin{table}[]
\begin{center}
    \begin{tabular}{| l | l | l | l |}
    \hline
    $n$, $\frac{\beta }{ \delta}$ & $1/10$ & $1$ & $10$ \\ \hline
    6  & 0, 0, 0  &   5.0, 4.8, 0  &   5.90, 5.88, 5.76 \\ \hline
    8  & 0, 0, 0  &   7.0, 5.5, .70  &   7.90, 7.89, 7.77 \\  \hline
    10 & 0, 0, 0  &   9.00, 8.89, 7.56  &   9.90, 9.89, 9.78 \\ \hline
    13 & 3.0, 2.17, 0  &  12.0, 11.9, 10.9  &  12.90, 12.89, 12.78  \\
    \hline
    \end{tabular}
    \caption{Comparison of complete graph models to \eqref{eq:sis}: Each cell is $I(T), \sum_{i=1}^{n} p_i(T), \sum_{i=1}^{2^n} v_i(T)$, with $T=10000$ and $I(0) = 1$.}
    \label{tab:kermack2}
\end{center}
\end{table}

\subsubsection{Dynamic Graphs}

In this section we use several examples to highlight the effectiveness and ineffectiveness of the $n$-intertwined model in \eqref{eq:tv} as a mean field approximation of \eqref{eq:2nt} for dynamic graph structures. 
For these simulations, the weighting matrix $A(t)$ is dependent on the agents' relative positions; that is, using the definition from \cite{shi2000normalized}, for some radius $r$ and $i\neq j$,
\begin{equation}\label{eq:at}
a_{ij}(t) = \begin{cases}
    e^{-\|z_i(t) - z_j(t)\|^2}, & \text{if } \|z_i(t) - z_j(t)\| < r \\
    0,              & \text{otherwise},
\end{cases}
\end{equation}
where $z_i(t)\in \mathbb{R}^d$ is the position of agent $i$ in $d$-space. %We note, however, that in the subsequent discussion, there is no explicit dependence on this choice of model for $A(t)$ unless specifically stated. 
Note that under the construction in \eqref{eq:at}, $A(t)$ is undirected.

First consider the case of constant drift for the positional dynamics of the agents, that is,
\begin{equation}\label{eq:phi}
\dot{z}(t) = \phi,
\end{equation}
where $\phi$ is some constant vector. %Solving this simple differential equation and substituting the solution into \eqref{eq:at}, the matrix $A(t)$ becomes
%\begin{equation}\label{eq:asol}
%a_{ij}(t) = %e^{-\|\phi_i t +x_i(0)- (\phi_j t + x_j(0))\|^2} = 
%e^{-\|(\phi_i - \phi_j) t +z_i(0)- z_j(0)\|^2}.
%\end{equation}
As we see in Figure \ref{fig:compDrift}, the upper bounding nature of the $n$-intertwined Markov chain model leads to a decent approximation; at time step 40, the $2^n$ model has reached the DFE, whereas the $n$-intertwined Markov chain model has not. However,  the $n$-intertwined  model reaches the DFE shortly thereafter %in the 100 time step simulation 
(see the link referenced  in the caption of Figure \ref{fig:compDrift}). 

\begin{figure}%[!ht]
    \centering
      \includegraphics[width=.95\columnwidth]{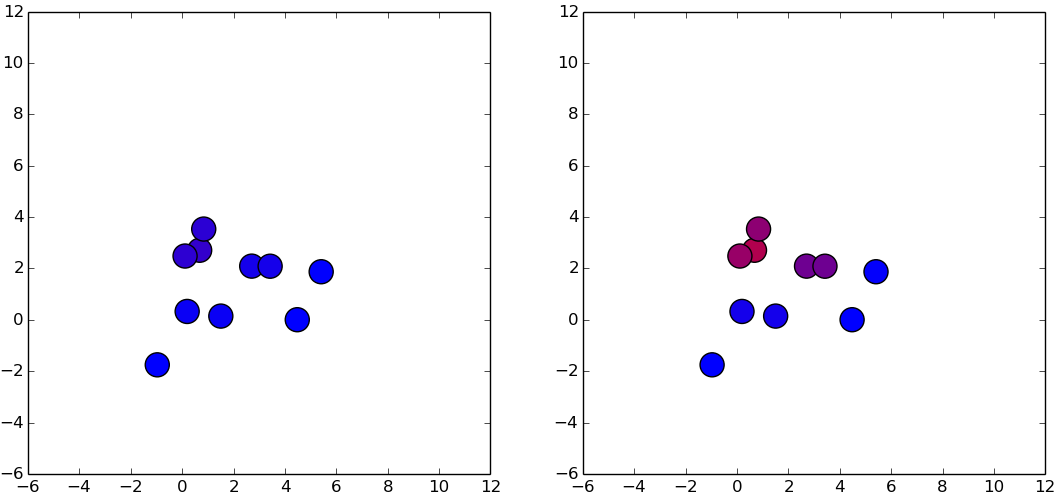}%{virusCompDrift}
      \caption{This system has constant drift as explained in \eqref{eq:phi} for each node with $r=1$. The $2^n$ model is on the left and the $n$-intertwined model is on the right. Blue indicates the agent is healthy and red indicates the agent is infected. This figure gives a snapshot of the system at time 40. For a video of this simulation please see \href{https://youtu.be/-LmPj7oynLs}{youtu.be/-LmPj7oynLs}.}%{https://youtu.be/BDQcbr32WYg}{youtu.be/BDQcbr32WYg}.}
\label{fig:compDrift}
\end{figure}

For another comparison we will use a piecewise constant drift so that the agents remain confined to a fixed region. Without loss of generality, let the constrained region be a hypercube $l^d$, where $d$ is the dimension of the space, centered at some point $z_c$. That is, the dynamics follow \eqref{eq:phi} but instead of a constant $\phi$ term  for each agent we have,
\begin{equation}\label{eq:phipiece}
\phi_k =\begin{cases}
    -\phi_k, & \text{if } z_k=z_{c_k}+l/2 \text{ or } z_k=z_{c_k}-l/2 \\
   \ \ \phi_k ,             & \text{otherwise},
\end{cases}
\end{equation}
for each dimension $k = 1,\dots, d$. That is, if an agent hits a boundary, the velocity of the agent in the dimension corresponding to that boundary flips sign. 
As illustrated in Figure \ref{fig:compAt1}, the upper bounding nature of the $n$-intertwined Markov chain model leads to an inaccurate approximation, as the $2^n$ model reaches the DFE but the $n$-intertwined model does not   and does not appear to be tending towards the DFE. 

\begin{figure}%[!ht]
    \centering
      \includegraphics[width=.95\columnwidth]{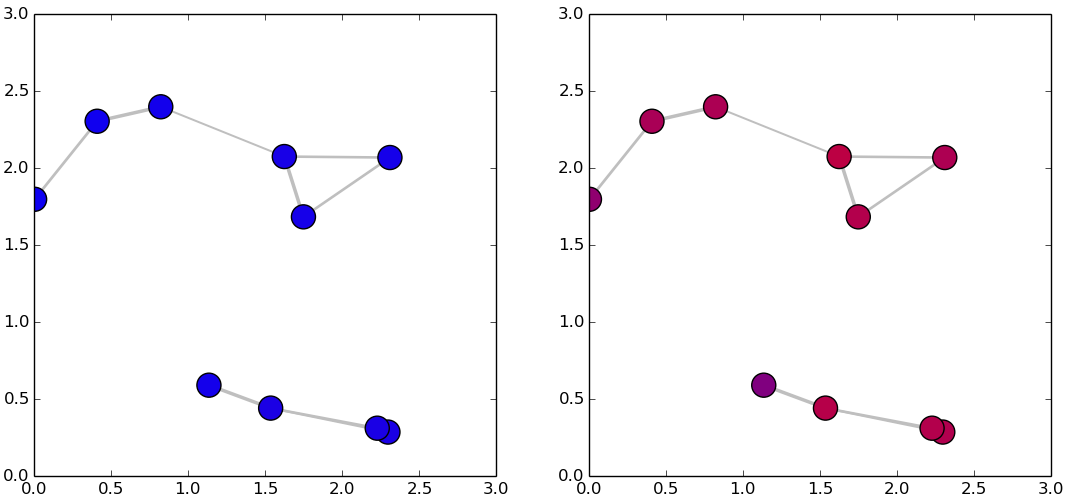}
      \caption{This system has piecewise constant drift as explained in \eqref{eq:phipiece} for each node with $r=1.5$. The $2^n$ model is on the left and the $n$-intertwined model is on the right. Blue indicates the agent is healthy and red indicates the agent is infected. This figure gives a snapshot of the system at time 40.  For a video of this simulation please see \href{https://youtu.be/BMn4FGnBZX0}{youtu.be/BMn4FGnBZX0}.}%{https://youtu.be/nQk3HYaDlx4}{youtu.be/nQk3HYaDlx4}.}
\label{fig:compAt1}
\end{figure}

%It is unclear why these two instances are so different; perhaps more time was needed for convergence of the other two models.

\subsection{Exploratory Time--Varying Simulations and Extensions}

%Therefore, non-traditionally, in each caption we include a link to a video of a full simulation. To watch a playlist of all the simulations consecutively click \href{http://goo.gl/TxH0z6}{\underline{here}}. %In the document we will do our best to illustrate the ideas via static plots. The links are located in the captions of the figures.

%For all of the simulations in this section, unless otherwise noted, we have 20 agents with random initial conditions for position and %random initial
%velocity. The initial infection $p(0)$ is set by randomly selecting a subset of the agents and completely infecting them, $p_i(0) = 1$, leaving the rest of the agents completely healthy (susceptible), $p_i(0) = 0$. The adjacency matrix $A(t)$ is constructed by the agents' relative positions following \eqref{eq:at} with an infinite radius $r$. The agents are confined to a fixed region and follow different update rules for their velocities. In this section each simulation ran for 400 time steps, an arbitrarily chosen number.
Considering different dynamics models provides us with insights leading to several corollaries of Theorem \ref{thm:atorigin}.

\subsubsection{Constant Drift}

If the agents have constant (non-equal) drift, defined in \eqref{eq:phi}, they will eventually float away from each other far enough that, assuming they have non-zero healing rate, the disease will be eradicated. 
This is illustrated in the simulations depicted in Figure \ref{fig:compDrift}. %To illustrate this situation, we consider a system with $20$ agents where all the agents have a non-zero healing rate and several of them are infected at time zero (see Figure \ref{fig:const0}). Assigning each agent a random constant drift results in them drifting apart. Therefore, with time, they all are healed (see Figure \ref{fig:const150}).% See Figure \ref{fig:const} for plots of this example.

This behavior is captured in the following corollary:
\begin{corollary}\label{cor:atT}
If $B = \beta I$ and $A(t)$ is symmetric,  piecewise continuous in $t$, and bounded $\forall t \geq T$, and for some fixed $T$, $\sup_{t\geq T} \lambda_1(BA(t)-D)<0$, then $\forall t \geq T$ the DFE is globally exponentially  stable.
\end{corollary}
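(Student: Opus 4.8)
The plan is to observe that this corollary is Theorem~\ref{thm:atorigin} with the initial time shifted from $0$ to $T$, and to re-run that theorem's Lyapunov argument on the interval $[T,\infty)$. The hypotheses $B=\beta I$ (equivalently $\beta_i=\beta$ for all $i$) together with the symmetry, piecewise continuity, and boundedness of $A(t)$ for $t\geq T$ are precisely the structural requirements of Theorem~\ref{thm:atorigin}, now restricted to $t\geq T$ rather than imposed on all of $[0,\infty)$. So the entire task reduces to checking that the only place the hypothesis ``$\sup_{t\geq 0}$'' was used can be replaced, verbatim, by ``$\sup_{t\geq T}$''.

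First I would invoke Lemma~\ref{lem:pos}: since $p_i(0)\geq 0$ we have $p_i(t)\geq 0$ for all $t\geq 0$, and in particular for all $t\geq T$. This positivity is exactly what guarantees that $(P(t)BA(t))_{ij}\geq 0$ for all $i,j$ on the interval of interest, which is needed to justify dropping that term.

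Next, using the same Lyapunov candidate $V(p)=\tfrac{1}{2}p^Tp$, I would reproduce the chain of inequalities from the proof of Theorem~\ref{thm:atorigin}, but with the supremum taken over $t\geq T$. For $t\geq T$ and $p\neq 0$, discarding the non-negative term $p^T P(t)BA(t)\,p$ and applying the Rayleigh--Ritz bound to the symmetric matrix $BA(t)-D$ yields
\begin{equation*}
\dot{V}(p)\leq\left(\sup_{t\geq T}\lambda_1(BA(t)-D)\right)\|p\|^2<0.
\end{equation*}
The symmetry of $BA(t)-D$ for $t\geq T$ follows, just as before, from $A(t)$ being symmetric there and $B=\beta I$.

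Finally, since on $[T,\infty)$ the right-hand side of \eqref{eq:tv} is piecewise continuous in $t$ and, by the boundedness of $A(t)$ there, locally Lipschitz in $p$, I would apply Theorem~8.5 in \cite{khalil1996nonlinear} to conclude global exponential stability of the DFE for all $t\geq T$. The main (and essentially only) obstacle is bookkeeping: one must confirm that the positivity invariant of Lemma~\ref{lem:pos} carries forward up to time $T$ so that it remains available on $[T,\infty)$. Because that lemma already delivers $p_i(t)\geq 0$ for every $t\geq 0$ from $p_i(0)\geq 0$ alone, nothing further is required and the corollary follows at once.
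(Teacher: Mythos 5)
Your proposal is correct and follows essentially the same route as the paper: the paper's proof is a one-line time shift ($\hat{t} = t-T$) followed by an appeal to Theorem~\ref{thm:atorigin}, and your argument is exactly that reduction, just unrolled --- you re-run the Lyapunov chain of inequalities with $\sup_{t\geq T}$ in place of $\sup_{t\geq 0}$ and explicitly note that Lemma~\ref{lem:pos} supplies nonnegativity of $p(t)$ on $[T,\infty)$, a bookkeeping point the paper leaves implicit.
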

\begin{proof}
Let $\hat{t} = t-T$. The result follows immediately from applying Theorem \ref{thm:atorigin} to $\dot{p}(\hat{t})$ for $\hat{t}\geq 0$. 
\end{proof}

%\begin{figure}[!tbp]%[!ht]
%    \centering
%    \begin{subfigure}[b]{0.24\textwidth}
%      \includegraphics[width=\textwidth]{drift_0}
%      \caption{The system at time zero with several infected (red) nodes.}
%      \label{fig:const0}
%    \end{subfigure}%
%    \hfill
%    \begin{subfigure}[b]{0.24\textwidth}
%      \includegraphics[width=\textwidth]{drift_end}
%      \caption{The system at time 150: Note  the disease has been eradicated.}
%      \label{fig:const150}
%    \end{subfigure}
%    \caption{The red nodes are infected% (with a high probability)
%    , teal nodes are healthy, and the spectrum of colors between the two indicates partial sickness. This system has random constant drift for each node and evolves for $150$ time steps. For a video of this simulation please see \href{https://youtu.be/I7CJVVYTLWc}{youtu.be/I7CJVVYTLWc}.}
%\label{fig:const}
%\end{figure}

\subsubsection{Piecewise Constant Drift}

Consider the $n$-intertwined model with piecewise constant drift illustrated by the right hand side of Figure \ref{fig:compAt1}. 
At several time instances the disease appears to be approaching the DFE, but when the graph structure changes, due to certain agents crossing paths, the system is pushed away from the DFE. %This behavior is illustrated by plotting the maximum eigenvalue of $BA(t)-D$, shown in Figure \ref{fig:bounceeig}. Notice how the maximum eigenvalue is below zero but jumps above zero every couple time steps. The agents do not have enough time to heal themselves before being negatively affected by their neighbors. 
The behavior of the system is consistent with  Theorem \ref{thm:DFEasymp} and Lemma \ref{lem:unst}; the system is constantly fluctuating between approaching the DFE and tending away from it. %This simulation, combined with the other two simulations referenced in the caption of Figure \ref{fig:bounce1}, motivates the following corollary. % automatically.%An example illustrating this with $n=20$ and $l = 40$, is shown in Figure \ref{fig:bounce}.
%This leads to the following density conjecture.

%First we will introduce a density-like function $\psi(n,l)$:% that measures the density of a bounded square region:
%\begin{definition}
%For $n\geq 2$ and $l>0$, let $\psi(n,l)$ be the function that equals the maximum possible sum of the squares of the entries of a matrix defined by \eqref{eq:at}, \eqref{eq:phi}, and \eqref{eq:phipiece}. That is,
%\begin{equation}
%    \psi(n,l) = \max_{t}\sum_{ij} a_{ij}^2(t) = \max_{t} \sum_{ij} e^{-2\|z_i(t) - z_j(t)\|^2},
%\end{equation}
%with $z_i(t)$ constrained to be inside a $l \times l$ box for all $t,i$.
%\end{definition}
%\noindent Therefore $\psi(n,l)$ looks like this for a sample of $n$ values:
%\begin{center}
%\begin{tabular}{ |c|c| } 
% \hline
% $n$ & $\min_{t} \sum_{ij} a_{ij}(t)$ \\ \hline
% $2$ & $2e^{-\sqrt{2}l}$ \\ 
% $3$ & $4e^{-l} + \frac{2}{3}e^{-\sqrt{2}l}$  \\ 
% $4$ & $2e^{-l} + e^{-\sqrt{2}l}$ \\ 
% \vdots & \vdots \\
% \hline
%\end{tabular}
%\end{center}
\begin{remark}\label{rem:obv}
Suppose $B=\beta I$, $D = \delta I$,  and $A(t)$ is determined by \eqref{eq:at}, \eqref{eq:phi}, and \eqref{eq:phipiece} with a finite $l$. If $n^2-n < \frac{\delta}{\beta}$ then the DFE is globally asymptotically stable. This follows from the Frobenius norm bounding the spectral radius, the bound on $a_{ij}(t)$ imposed by \eqref{eq:at}, and Theorem \ref{thm:atorigin}.
\end{remark}
%\begin{proof}
%Recall that the spectral radius is bounded above by any matrix norm. We will use the Frobenius norm:
%\begin{equation}
%    \lambda_1(A(t)) \leq \sum_{ij} a_{ij}^2(t).
%\end{equation}
%By \eqref{eq:at}, the maximum this can reach is one, $\max_t a_{ij}^2(t) = 1$. Therefore 
%The left hand side of this equation is bounded below by $\psi(n,l)$, by definition.
%\begin{equation}
%    \sup_{t\geq0}\lambda_1(A(t)) \leq n^2-n.
%\end{equation}
%Therefore since we have $n^2-n < \frac{\delta}{\beta}$
%\begin{equation}\label{eq:bound}
%    \sup_{t\geq0} \lambda_1(A(t))<\frac{\delta}{\beta}.
%\end{equation}
%Since $B=\beta I$ and $D = \delta I$, \eqref{eq:bound} is equivalent to $\sup_{t\geq0} \lambda_1(BA(t)-D)<0$.  
%Therefore since $A(t)$ is determined by \eqref{eq:at}, \eqref{eq:phi}, and \eqref{eq:phipiece} with a finite $l$, it is symmetric, piecewise continuous in $t$, and bounded by Theorem \ref{thm:atorigin},  we have GES.
%\end{proof}

\noindent This remark states that if $ \frac{\delta}{\beta}$ is large enough then the virus will be eradicated, appealing to Theorem \ref{thm:atorigin}. 
\begin{remark}
Simulations show  for systems with dynamics determined by \eqref{eq:at}, \eqref{eq:phi}, and \eqref{eq:phipiece}, if $\lambda_1(BA(t)-D)<0$ for more than half the time than the system  still converges to the DFE; so the bound in Remark \ref{rem:obv} is very conservative. %Adding a bound dependent on $l$ would be interesting but the analysis on this type of time--varying system is quite challenging.
\end{remark}%Therefore it is probably possible to find a tighter bound.

%ADD: Prove this conjecture, change it to something we can prove, or delete it.
%(TO DO: Find $\gamma$)

\subsection{Behavior of Systems}
As we have seen, for the time--varying graphs  considered herein, the sign of $s_1(BA(t)-D)$  can easily change %As will be  illustrated clearly by the simulations in Section \ref{sec:sim},  
(see Figure \ref{fig:quareig}). 
%The results of this section are very intuitive. 
It is well--known that the origin is an unstable equilibrium if $s_1(BA(t)-D)>0$, by Lyapunov's indirect method:
\begin{lemma} \label{lem:unst}\emph{\cite{khanafer2014stability}}
The origin is unstable when $s_1(BA-D)>0$.
\end{lemma}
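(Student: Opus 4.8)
The plan is to invoke Lyapunov's indirect method by linearizing the nonlinear system \eqref{eq:van} about the origin. First I would write the right-hand side componentwise as $f_i(p) = (1-p_i)\beta_i \sum_{j=1}^n a_{ij} p_j - \delta_i p_i$, which is a polynomial, hence $C^\infty$, in $p$; this guarantees that the smoothness hypotheses required by the indirect method hold automatically near the equilibrium.

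Next I would compute the Jacobian $\partial f/\partial p$ evaluated at $p=0$. The crucial observation is that the only nonlinearity, the term $-P(t)BA\,p$ with $P = diag(p_1,\dots,p_n)$, is quadratic in $p$ and therefore contributes nothing to the first-order part. Explicitly, differentiating gives $\left.\partial f_i/\partial p_k\right|_{p=0} = \beta_i a_{ik} - \delta_i \delta_{ik}$ (Kronecker delta), so the Jacobian at the origin is exactly $BA - D$. In other words, the linearization of \eqref{eq:van} about the DFE coincides with the linear system $\dot{p} = (BA-D)p$.

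Finally, since $s_1(BA-D) > 0$ by hypothesis, the matrix $BA-D$ has at least one eigenvalue with strictly positive real part. By Lyapunov's indirect method (e.g. Theorem 4.7 in \cite{khalil1996nonlinear}), the presence of such an eigenvalue forces the origin to be an unstable equilibrium of the nonlinear system, which is the claim. I expect no genuine obstacle here: the only point demanding care is verifying that the quadratic term $P(t)BA\,p$ drops out of the linearization, after which the conclusion is an immediate application of a standard instability theorem.
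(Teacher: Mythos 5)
Your proof is correct and follows essentially the same route as the paper: the paper states this lemma as a known result from \cite{khanafer2014stability}, justified in the surrounding text precisely by Lyapunov's indirect method, which is the linearization argument you spell out (the quadratic term $P(t)BA\,p$ indeed vanishes in the Jacobian at the origin, leaving $BA-D$). Your write-up simply fills in the standard details that the paper leaves to the citation.
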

Consequently, if the disease is close to disappearing and then $s_1(BA(t)-D)$ becomes positive, the disease can easily reemerge as long as $p(t)\neq 0$. That is, these systems may jump back and forth between tending towards or away from the DFE. There are several ways to eliminate this behavior: inhibit the agents' movement dynamics, make their movement identical (practically reducing it to the static case), spread the agents out% (Corollary \ref{cor:atT})
, or eradicate the disease  completely. %However if even a single node is still slightly infected the disease can reappear.
\begin{remark}\label{cor:free}
If there exists a $T$ such that $p(T)=0$, then $p(t) = 0$ $ \forall t\geq T$ and therefore the graph structure does not affect the DFE.
By Lemma \ref{lem:unst} if $s_1(BA(t)-D)>0$ the origin is unstable. However if there is no disease at time $T$, that is $p(T)=0$, then it is irrelevant if the origin is unstable because no infection exists in the system. Therefore, no bound is necessary on $s_1(BA(t)-D)$ for $t>T$.
\end{remark}
\noindent That is, if there is a long enough time period $ [t_1, T]$, where  $s_1(BA(t)-D)<0$, such that $p(T)=0$, then it does not matter if $s_1(BA(t)-D)\geq 0$ for $t\geq T$. This motivates the following discussions. %If we can develop control techniques to completely eradicate the disease, then the problem is solved.

\subsubsection{Quarantine}
\begin{figure}%[!ht]
    \centering
    \begin{subfigure}[b]{0.49\textwidth}
      \includegraphics[width=\textwidth]{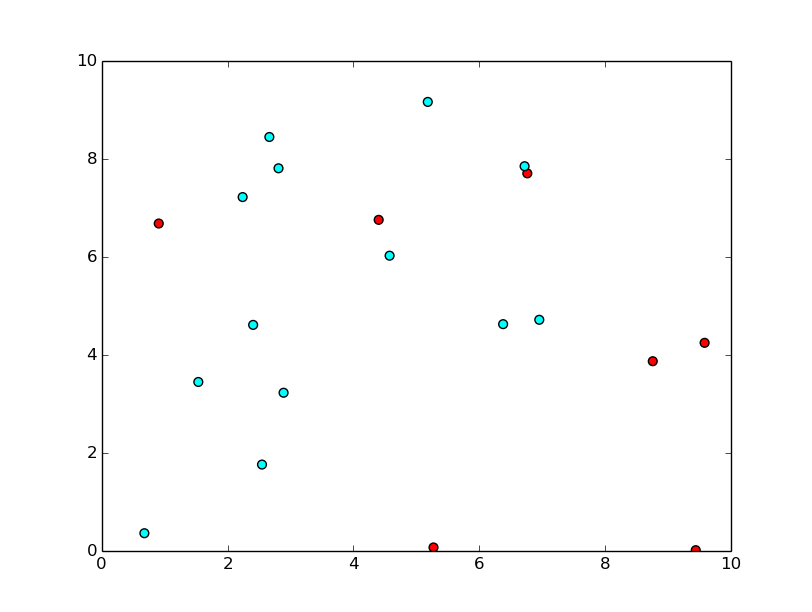}
      \caption{The system at time zero.}
      \label{fig:quar0}
    \end{subfigure}
    \hfill
    \begin{subfigure}[b]{0.49\textwidth}
      \includegraphics[width=\textwidth]{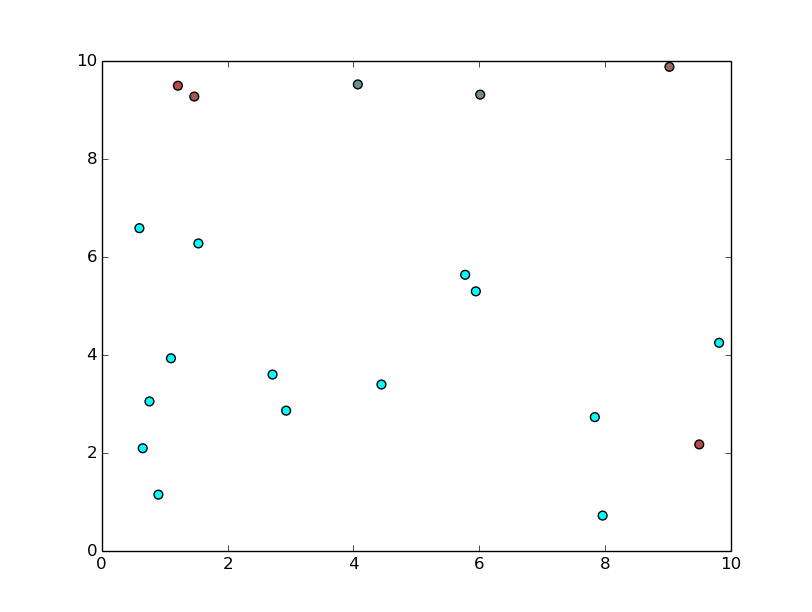}
      \caption{The system at time 400.}
      \label{fig:quar400}
    \end{subfigure}
    \caption{This system has piecewise constant drift as explained in \eqref{eq:phipiece} for each node and evolves for 400 time steps. After 50 time steps a quarantine is implemented, limiting the agents to certain regions. This separates the sick and the more prone to get sick agents from the others. %The disease appears to be disappearing for the healthy portion and the quarantine area maintains a similar behavior.
For a video of this simulation please see \href{http://youtu.be/NfskXS83FHI}{youtu.be/NfskXS83FHI}.}
\label{fig:quar}
\end{figure}
If the system is too dense then the agents can be split into different groups, spatially bounding them to separate regions. This technique is called a quarantine \cite{wan2007network,enns2012optimal}, and  is reflected in the model by removing edges in the graph, i.e. setting specific elements of the $A$ matrix to zero. %For the time--varying case it would mean restricting the movement of agents such that certain interactions were eliminated. 
The quarantine essentially imposes a block diagonal structure on the $A$ matrix, given that the states are properly ordered; this restricts  interaction between certain agents, which can clearly reduce the spread of a virus.

A quarantine is difficult to implement, as was witnessed recently with the Ebola virus \cite{cnnEbola}. A quarantine could also be effected by less costly implementations, such as, decreasing human contact via limiting handshakes and other greetings, instilling good habits of covering mouths, etc. Without restricting movement, these measures would decrease the weight of the links between agents, which would be reflected in the model by decreasing the values of $a_{ij}$. 

For the following discussion we assume $A$ is symmetric. If the $a_{ij}$'s can be restricted such that, for $\epsilon_i<0$, %if for every agent $i$ we choose a set of $j$'s where we set $a_{ij}(t) = 0$, or decrease the values sufficiently, such that for some
\begin{align*}
 \sup_{t \geq 0} \sum_{i = 1}^n a_{ij}(t) \leq \epsilon_i + \frac{\delta_i}{\beta} \ \forall i, \\ 
 \Rightarrow \beta \sup_{t \geq 0}\: \sum_{i = 1}^n a_{ij}(t) -\delta_i \leq \epsilon_i  \ \forall i.
\end{align*}
%\sup_{t \geq 0} \sum_{i = 1}^n a_{ij}(t) = \epsilon + \frac{\delta_i}{\beta_i},
Therefore, by the Gershgorin Disc Theorem \cite{horn2012matrix},
\begin{equation*}
\sup_{t\geq0} \lambda_1(BA(t)-D)<0.
\end{equation*}
So by Theorem 1 %\ref{thm:atorigin}
the disease will be eradicated in exponential time.

We can implement a quarantine on the piecewise constant drift case by imposing a block diagonal structure, limiting the movement of certain agents so that they do not interact with others. Consider a system with 20 agents, originally confined to a $40\times 40$ box with certain random initial conditions (see Figure \ref{fig:quar0}). After 50 time steps a quarantine is imposed, limiting some agents to the region $[ 0, 25]\times [0,25]$ and exiling the rest of the agents to the outside boundary. This separates the sick and the more prone to get sick agents from the others. One set of agents is tending towards the DFE and the other is not, which is consistent with the maximum eigenvalue plot in Figure \ref{fig:quareig}.
 This leads to the following corollary:
\begin{corollary}\label{cor:quar}
Imposing a block diagonal graph structure, such that $A(t) = diag(A_1(t), \dots, A_q(t))$, makes the DFE globally exponentially stable if %and only if 
$\lambda_1(B_l A_l(t)-D_l)<0$ for all $t\geq 0$ and $l = 1,\dots , q$, with $B_l = \beta_l I$ and $A(t)$ is symmetric,  piecewise continuous in $t$, and bounded $\forall t \geq 0$.
\end{corollary}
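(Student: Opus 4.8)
\textbf{Proof proposal for Corollary \ref{cor:quar}.}

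The plan is to reduce the block-diagonal system to a collection of decoupled subsystems and then apply Theorem \ref{thm:atorigin} to each block independently. The key observation is that a block-diagonal structure on $A(t)$, together with the assumption that $B_l = \beta_l I$ on each block, makes the dynamics in \eqref{eq:tv} separate into $q$ non-interacting subsystems. First I would partition the state vector $p(t)$ conformally with the block decomposition $A(t) = \mathrm{diag}(A_1(t),\dots,A_q(t))$, writing $p(t) = [p^{(1)}(t)^T,\dots,p^{(q)}(t)^T]^T$, and likewise partition $B = \mathrm{diag}(B_1,\dots,B_q)$, $D = \mathrm{diag}(D_1,\dots,D_q)$, and $P(t) = \mathrm{diag}(P_1(t),\dots,P_q(t))$. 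Because all four matrices share the same block-diagonal sparsity pattern, the product $BA(t) - P(t)BA(t) - D$ is itself block diagonal, so that \eqref{eq:tv} decouples into
\begin{equation*}
\dot{p}^{(l)}(t) = (B_l A_l(t) - P_l(t) B_l A_l(t) - D_l)\, p^{(l)}(t), \qquad l = 1,\dots,q.
\end{equation*}

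Next I would verify that each of these $q$ subsystems satisfies the hypotheses of Theorem \ref{thm:atorigin}. On block $l$ we have $B_l = \beta_l I$ by assumption, so the homogeneity-in-$\beta$ requirement holds within the block. The symmetry, piecewise continuity, and boundedness of $A(t)$ for all $t \geq 0$ are inherited by each diagonal sub-block $A_l(t)$, since extracting a principal submatrix preserves these properties. The corollary's hypothesis directly supplies $\lambda_1(B_l A_l(t) - D_l) < 0$ for all $t \geq 0$ and all $l$; to match the supremum condition of Theorem \ref{thm:atorigin} exactly I would note that this pointwise-in-$t$ bound, combined with the boundedness of $A_l(t)$, yields $\sup_{t \geq 0} \lambda_1(B_l A_l(t) - D_l) < 0$ (a continuity/compactness argument on the closure of the range of $A_l$). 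Applying Theorem \ref{thm:atorigin} to each subsystem then gives global exponential stability of the DFE $p^{(l)} = 0$ for each block.

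Finally I would reassemble the blocks. Since the subsystems evolve independently, each $\|p^{(l)}(t)\|$ decays exponentially, say $\|p^{(l)}(t)\| \leq c_l e^{-\lambda_l t}\|p^{(l)}(0)\|$, and I would combine these into a single bound on $\|p(t)\|$ by taking $c = \max_l c_l$ and $\lambda = \min_l \lambda_l > 0$, using $\|p(t)\|^2 = \sum_l \|p^{(l)}(t)\|^2$. This establishes global exponential stability of the full DFE. The main obstacle I anticipate is not any single hard estimate but rather the careful bookkeeping needed to justify that the decoupling is genuine — specifically, confirming that the nonlinear term $P(t)BA(t)$ respects the block structure (which it does precisely because $P(t)$ is diagonal and hence trivially block diagonal, so it cannot couple distinct blocks). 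Once the decoupling is made rigorous, the result is an immediate consequence of Theorem \ref{thm:atorigin} applied blockwise, so the proof is essentially a reduction argument rather than a fresh stability analysis.
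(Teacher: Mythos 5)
Your reduction is exactly the paper's argument: the paper's proof likewise notes that since $A(t)$ is block diagonal and $B$ and $D$ (hence also $P(t)$) are diagonal, $p(t)$ partitions into $q$ independent pieces, and Theorem~\ref{thm:atorigin} is applied to each $\dot p_l(t)$; your decoupling and reassembly steps merely spell out what the paper leaves implicit, and they are correct.

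The one step where you go beyond the paper is the attempt to upgrade the corollary's pointwise hypothesis $\lambda_1(B_lA_l(t)-D_l)<0$ for all $t\geq0$ to the supremum hypothesis $\sup_{t\geq0}\lambda_1(B_lA_l(t)-D_l)<0$ that Theorem~\ref{thm:atorigin} actually requires, and that step is not valid: boundedness of $A_l(t)$ does not prevent the largest eigenvalue from creeping up to $0$ as $t\to\infty$, because the closure of the range of $A_l$ may contain a matrix whose top eigenvalue is exactly $0$. Concretely, take a single $2\times2$ block with $\beta_l=1$, $D_l=I$, and
\begin{equation*}
A_l(t)=\begin{pmatrix} 0 & 1-e^{-t}\\ 1-e^{-t} & 0\end{pmatrix},
\end{equation*}
which is symmetric, continuous, bounded, and has zero diagonal; then $\lambda_1(B_lA_l(t)-D_l)=-e^{-t}<0$ for every $t\geq0$, while the supremum equals $0$. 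Theorem~\ref{thm:atorigin} is therefore inapplicable, and exponential stability genuinely fails: for symmetric initial conditions $p_1(0)=p_2(0)$, the dynamics \eqref{eq:tv} reduce to $\dot p=-e^{-t}p-(1-e^{-t})p^2$, whose solutions decay no faster than $1/t$. So the pointwise condition cannot be rescued by a compactness argument; the hypothesis itself must be strengthened. To be fair, this defect is present in the paper as well --- the corollary is stated with the pointwise condition and its one-line proof silently invokes Theorem~\ref{thm:atorigin} --- but your proposal, by asserting a bridge that does not exist, turns the paper's unstated gap into an explicit false claim. With the hypothesis strengthened to $\sup_{t\geq0}\lambda_1(B_lA_l(t)-D_l)<0$ for each $l$, the rest of your argument (blockwise application of Theorem~\ref{thm:atorigin}, then reassembly with $c=\max_l c_l$ and $\lambda=\min_l\lambda_l$) goes through verbatim.
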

\begin{proof}
Since $A(t)$ is block diagonal and $B$ and $D$ are diagonal, $p(t)$ can be partitioned into $q$ pieces. Then the result follows immediately from applying Theorem \ref{thm:atorigin} to each $\dot{p}_l(t)$.
\end{proof}
%\noindent Therefore if we can restrict the system such that each block $A_l$ meets the density condition in Conjecture \ref{con:density}, the system will approach the DFE in exponential time.

\begin{figure}%[!ht]
\centering
    \includegraphics[scale=.58]{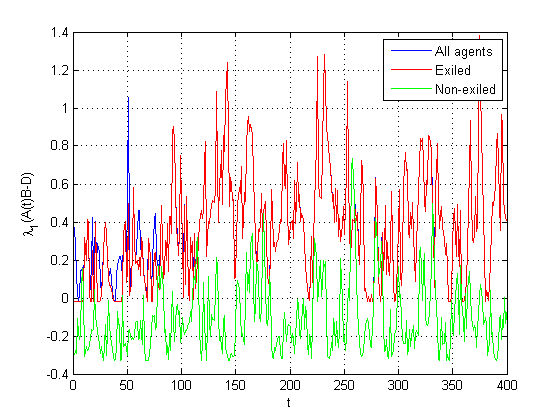}
    \caption{This is the plot of the maximum eigenvalues  the system shown in Figure \ref{fig:quar}. The blue line is the total system, the red line is the outside (sick) group, and the green line is the inside (healthy) group. Notice the max eigenvalue of the inside group is  below zero on average.}
\label{fig:quareig}
\end{figure}

This approach does not consider any reintegration process for healed agents. For this to be a feasible, effective control technique, healed agents would have to be allowed to rejoin the general population. Therefore, appealing to Corollary \ref{cor:free}, once time $T$ was reached such that $p(T)= 0$, restrictions would have to be lifted. More interestingly, an agent by agent policy could be implemented so that once an agent was healed, i.e. $p_i(t) = 0$ for some $t$, re-admittance would be permitted, contingent on them remaining healthy during re-entrance.

\section{Conclusion}\label{sec:con}

We have extended well--studied SIS models to the time--varying graph structure case. Prior to this, the dynamic modeling of such systems was mainly focused on networks with static graph structures. This extension makes the models more realistic and gives us better insight into disease propagation in most settings. We provided a stability analysis of the time--varying, weighted, undirected and directed, deterministic and stochastic $n$-intertwined Markov chain models for the DFE. We compared the $2^n$ model and the $n$-intertwined Markov chain model for both static and dynamic graph structures via simulation, showing the weaknesses and strengths of the mean field approximation. We provided various different insightful network simulations with links to videos, which inspired a number of corollaries and remarks. We also explored quarantine control via simulation. %and presented a formulation of an antidote control technique.

For future work we would like to further explore  the development of  optimal control problems. %We would also like to simulate an adversary %or a marketer
%by changing the objective function for these techniques to drive the system to $p^{\star}$ and maximize the disease.
Exploring the existence and stability properties of the NDFE trajectory for the generic time--varying model  is a problem that requires more investigation. %Ultimately we would like to have these two in a game, one fighting to drive the system to the origin and the other trying to drive the system to $p^{\star}$. 
We would also like to extend all these ideas to large--scale simulations with at least thousands of agents.

\section{Acknowledgements}

The authors wish to thank Geir Dullerud, Daniel Liberzon, Liming Feng, and Ji Liu for several insightful discussions related to this work.
%Summary

%Implications for nonlinear model reduction

\bibliographystyle{IEEEtran}
%{\footnotesize
\bibliography{bib}%}

% Generated by IEEEtran.bst, version: 1.13 (2008/09/30)
\begin{thebibliography}{10}
\providecommand{\url}[1]{#1}
\csname url@samestyle\endcsname
\providecommand{\newblock}{\relax}
\providecommand{\bibinfo}[2]{#2}
\providecommand{\BIBentrySTDinterwordspacing}{\spaceskip=0pt\relax}
\providecommand{\BIBentryALTinterwordstretchfactor}{4}
\providecommand{\BIBentryALTinterwordspacing}{\spaceskip=\fontdimen2\font plus
\BIBentryALTinterwordstretchfactor\fontdimen3\font minus
  \fontdimen4\font\relax}
\providecommand{\BIBforeignlanguage}[2]{{%
\expandafter\ifx\csname l@#1\endcsname\relax
\typeout{** WARNING: IEEEtran.bst: No hyphenation pattern has been}%
\typeout{** loaded for the language `#1'. Using the pattern for}%
\typeout{** the default language instead.}%
\else
\language=\csname l@#1\endcsname
\fi
#2}}
\providecommand{\BIBdecl}{\relax}
\BIBdecl

\bibitem{bernoulli1760essai}
D.~Bernoulli, ``Essai d’une nouvelle analyse de la mortalit{\'e} caus{\'e}e
  par la petite v{\'e}role et des avantages de l’inoculation pour la
  pr{\'e}venir,'' \emph{Histoire de l’Acad. Roy. Sci.(Paris) avec M{\'e}m.
  des Math. et Phys. and M{\'e}m}, pp. 1--45, 1760.

\bibitem{dietz2002bernoulli}
K.~Dietz and J.~Heesterbeek, ``Daniel {Bernoulli's} epidemiological model
  revisited.'' \emph{Mathematical Biosciences}, vol. 180, no. 1/2, p.~1, 2002.

\bibitem{kermack1932contributions}
W.~O. Kermack and A.~G. McKendrick, ``Contributions to the mathematical theory
  of epidemics. ii. the problem of endemicity,'' \emph{Proceedings of the Royal
  society of London. Series A}, vol. 138, no. 834, pp. 55--83, 1932.

\bibitem{fall2007epidemiological}
A.~Fall, A.~Iggidr, G.~Sallet, J.-J. Tewa \emph{et~al.}, ``Epidemiological
  models and {Lyapunov} functions,'' \emph{Math. Model. Nat. Phenom}, vol.~2,
  no.~1, pp. 62--68, 2007.

\bibitem{van2009virus}
P.~Van~Mieghem, J.~Omic, and R.~Kooij, ``Virus spread in networks,''
  \emph{Networking, IEEE/ACM Transactions on}, vol.~17, no.~1, pp. 1--14, 2009.

\bibitem{ahn2013global}
H.~J. Ahn and B.~Hassibi, ``Global dynamics of epidemic spread over complex
  networks,'' in \emph{Proceedings of the 52nd IEEE Conference on Decision and
  Control (CDC)}.\hskip 1em plus 0.5em minus 0.4em\relax IEEE, 2013, pp.
  4579--4585.

\bibitem{pare2015stability}
P.~E. Par\'{e}, C.~L. Beck, and A.~Nedi\'{c}, ``Stability analysis and control
  of virus spread over time--varying networks,'' in \emph{Proceedings of the
  54th IEEE Conference on Decision and Control (CDC)}.\hskip 1em plus 0.5em
  minus 0.4em\relax IEEE, 2015.

\bibitem{wang2003epidemic}
Y.~Wang, D.~Chakrabarti, C.~Wang, and C.~Faloutsos, ``Epidemic spreading in
  real networks: An eigenvalue viewpoint,'' in \emph{Reliable Distributed
  Systems, 2003. Proceedings. 22nd International Symposium on}.\hskip 1em plus
  0.5em minus 0.4em\relax IEEE, 2003, pp. 25--34.

\bibitem{peng2010epidemic}
C.~Peng, X.~Jin, and M.~Shi, ``Epidemic threshold and immunization on
  generalized networks,'' \emph{Physica A: Statistical Mechanics and its
  Applications}, vol. 389, no.~3, pp. 549--560, 2010.

\bibitem{boyd2007tutorial}
S.~Boyd, S.-J. Kim, L.~Vandenberghe, and A.~Hassibi, ``A tutorial on geometric
  programming,'' \emph{Optimization and engineering}, vol.~8, no.~1, pp.
  67--127, 2007.

\bibitem{preciado2013optimal}
V.~M. Preciado, M.~Zargham, C.~Enyioha, A.~Jadbabaie, and G.~J. Pappas,
  ``Optimal vaccine allocation to control epidemic outbreaks in arbitrary
  networks,'' in \emph{Proceedings of the 52nd IEEE Conference on Decision and
  Control (CDC)}.\hskip 1em plus 0.5em minus 0.4em\relax IEEE, 2013, pp.
  7486--7491.

\bibitem{preciado2014optimal}
------, ``Optimal resource allocation for network protection against spreading
  processes,'' \emph{IEEE Transactions on Control of Network Systems}, vol.~1,
  no.~1, pp. 99--108, March 2014.

\bibitem{bullo2014control}
F.~Pasqualetti, S.~Zampieri, and F.~Bullo, ``Controllability metrics,
  limitations and algorithms for complex networks,'' \emph{IEEE Transactions on
  Control of Network Systems}, vol.~1, no.~1, pp. 40--52, March 2014.

\bibitem{khanafer2014stability}
A.~Khanafer, T.~Basar, and B.~Gharesifard, ``Stability properties of infected
  networks with low curing rates,'' in \emph{American Control Conference (ACC),
  2014}.\hskip 1em plus 0.5em minus 0.4em\relax IEEE, 2014, pp. 3579--3584.

\bibitem{Khanafer15arxiv}
\BIBentryALTinterwordspacing
------, ``Stability of epidemic models over directed graphs: {A} positive
  systems approach,'' vol. 1407.6076, 2014. [Online]. Available:
  \url{http://arxiv.org/abs/1407.6076}
\BIBentrySTDinterwordspacing

\bibitem{nowzari2016analysis}
C.~Nowzari, V.~M. Preciado, and G.~J. Pappas, ``Analysis and control of
  epidemics: A survey of spreading processes on complex networks,'' \emph{IEEE
  Control Systems Magazine}, vol.~36, no.~1, pp. 26--46, 2016.

\bibitem{prakash2010virus}
B.~A. Prakash, H.~Tong, N.~Valler, M.~Faloutsos, and C.~Faloutsos, ``Virus
  propagation on time-varying networks: Theory and immunization algorithms,''
  in \emph{Machine Learning and Knowledge Discovery in Databases}.\hskip 1em
  plus 0.5em minus 0.4em\relax Springer, 2010, pp. 99--114.

\bibitem{bokharaie2010spread}
V.~Bokharaie, O.~Mason, and F.~Wirth, ``Spread of epidemics in time-dependent
  networks,'' in \emph{Proceedings of the 19th International Symposium on
  Mathematical Theory of Networks and Systems--MTNS}, vol.~5, no.~9, 2010.

\bibitem{rami2014switch}
O.~M. F. R.~W. Mustapha Ait~Rami, Vahid S.~Bokharaie, ``Stability criteria for
  {SIS} epidemiological models under switching policies ”[jun 12
  1537-1543],'' \emph{Discrete and Continuous Dynamical Systems Series},
  vol.~19, no.~9, pp. 2865--–2887, 2014.

\bibitem{chatterjee2009contact}
S.~Chatterjee, R.~Durrett \emph{et~al.}, ``Contact processes on random graphs
  with power law degree distributions have critical value 0,'' \emph{The Annals
  of Probability}, vol.~37, no.~6, pp. 2332--2356, 2009.

\bibitem{horn2012matrix}
R.~A. Horn and C.~R. Johnson, \emph{Matrix analysis}.\hskip 1em plus 0.5em
  minus 0.4em\relax Cambridge University Press, 2012.

\bibitem{khalil1996nonlinear}
H.~K. Khalil, \emph{Nonlinear systems}.\hskip 1em plus 0.5em minus 0.4em\relax
  Prentice Hall New Jersey, 1996, vol.~3.

\bibitem{ioannou2012robust}
P.~A. Ioannou and J.~Sun, \emph{Robust adaptive control}.\hskip 1em plus 0.5em
  minus 0.4em\relax Courier Corporation, 2012.

\bibitem{gronwall1919note}
T.~H. Gronwall, ``Note on the derivatives with respect to a parameter of the
  solutions of a system of differential equations,'' \emph{Annals of
  Mathematics}, vol.~20, no.~4, pp. 292--296, 1919.

\bibitem{khasminskii2011stochastic}
R.~Khasminskii, \emph{Stochastic stability of differential equations}.\hskip
  1em plus 0.5em minus 0.4em\relax Springer Science \& Business Media, 2011,
  vol.~66.

\bibitem{shi2000normalized}
J.~Shi and J.~Malik, ``Normalized cuts and image segmentation,'' \emph{Pattern
  Analysis and Machine Intelligence, IEEE Transactions on}, vol.~22, no.~8, pp.
  888--905, 2000.

\bibitem{wan2007network}
Y.~Wan, S.~Roy, and A.~Saberi, ``Network design problems for controlling virus
  spread,'' in \emph{Proceedings of the 46th IEEE Conference on Decision and
  Control (CDC)}.\hskip 1em plus 0.5em minus 0.4em\relax IEEE, 2007, pp.
  3925--3932.

\bibitem{enns2012optimal}
E.~A. Enns, J.~J. Mounzer, and M.~L. Brandeau, ``Optimal link removal for
  epidemic mitigation: A two-way partitioning approach,'' \emph{Mathematical
  Biosciences}, vol. 235, no.~2, pp. 138--147, 2012.

\bibitem{cnnEbola}
J.~Hanna and A.~Fantz, ``Maine nurse won't submit to {Ebola} quarantine, lawyer
  says,'' \url{http://www.cnn.com/2014/10/29/health/us-ebola/}, accessed:
  2015-03-19.

\end{thebibliography}

\end{document}